\newcommand{\End}{\mathcal{E}\mathrm{nd}}
\newcommand{\HHom}{\mathcal{H}\mathrm{om}}
\newcommand{\Ho}{\mathrm{Ho}}
\newcommand{\Map}{\mathrm{Map}}
\newcommand{\Hom}{\mathrm{Hom}}
\newcommand{\Emb}{\mathrm{Emb}}
\newcommand{\Conf}{\mathrm{Conf}}
\newcommand{\mcP}{\mathcal{P}}
\newcommand{\Fr}{\mathrm{Fr}}
\DeclareMathOperator*{\hocolim}{\mathrm{hocolim}}
\newcommand \vp{\varphi}
\newcommand\ob{\operatorname{Ob}}
\newcommand\op{\mathcal}
\newcommand\cat{\mathsf}
\newcommand\seq{\cat{Seq}}
\newcommand\adjunct[4]{\xymatrix{#1\ar @<1.25ex>[rr]^-{#3}&\perp&#2\ar @<1.25ex>[ll]^-{#4}}}
\newcommand\longadjunct[4]{\xymatrix{#1\ar @<1.25ex>[rrrr]^{#3}&&\perp&&#2\ar @<1.25ex>[llll]^{#4}}}
\newcommand\map{\operatorname{Hom}}
\newcommand\Mod[1]{\cat{Mod}_{#1}}
\newcommand\id{\mathrm{id}}
\renewcommand\S{\cat {sSet}}
\newcommand\A{\cat A}
\newcommand\F{\cat F}
\newcommand\C{\cat C}
\newcommand\D{\cat D}
\newcommand\M{\cat M}
\newcommand\E{\cat E}
\newcommand\V{\cat V}
\newcommand\Mfld{\cat{Mfld}}
\newcommand\Disk{\cat{Disk}}
\def\treeof(#1;#2){[#1;#2]}
\newcommand{\comp}{\relax}
\def\comp(#1;#2){#1\circ(#2)}
\theoremstyle{remark}\newtheorem*{question}{Question}
\theoremstyle{definition}\newtheorem{definition}{Definition}[section]
\theoremstyle{theorem}\newtheorem{lemma}[definition]{Lemma}
\theoremstyle{remark}\newtheorem{remark}[definition]{Remark}
\theoremstyle{definition}\newtheorem{notation}[definition]{Notation}
\theoremstyle{definition}
\theoremstyle{definition}
\theoremstyle{definition}\newtheorem{example}[definition]{Example}
\theoremstyle{theorem}\newtheorem{proposition}[definition]{Proposition}
\theoremstyle{theorem}\newtheorem{corollary}[definition]{Corollary}
\theoremstyle{theorem}\newtheorem{theorem}[definition]{Theorem}
\theoremstyle{definition}
\theoremstyle{theorem}\newtheorem{conjecture}[definition]{Conjecture}
\title{Configuration spaces of products}
\author{William Dwyer, Kathryn Hess, and Ben Knudsen}
\date{} 
\begin{document}

\maketitle

\begin{abstract}
We show that the configuration spaces of a product of parallelizable manifolds may be recovered from those of the factors as the Boardman-Vogt tensor product of right modules over {the operads of little cubes of the appropriate dimension}. We also discuss an analogue of this result for manifolds that are not necessarily parallelizable, which involves a new operad of \emph{skew little cubes}.
\end{abstract}


\section{Introduction}

To a manifold $M$, there is associated a family of spaces of fundamental geometric and homotopical interest, the \emph{configuration spaces} \[\Conf_k(M):=\left\{(x_1,\ldots, x_k)\in M^k: x_i\neq x_j\text{ if }i\neq j\right\}.\] Since their introduction by Fadell-Neuwirth \cite{fadell-neuwirth}, these spaces have been the subject of intensive study from many different perspectives---see \cite{cohen-lada-may}, \cite{axelrod-singer}, and \cite{ghrist} for a small, but varied, sampling. Throughout the history of this investigation, a guiding theme has been the following.

\begin{question}
How do the configuration spaces of $M$ depend on $M$ itself?
\end{question}

\subsection{Invariance and decomposition}One interpretation of this question is to view the homotopy type of $\Conf_k(M)$ as an invariant of $M$. It is easy to see that this invariant is not a homotopy invariant---most of the configuration spaces of a point are empty, for example---but it is much less obvious that it is not even a homotopy invariant of compact manifolds of equal dimension; indeed, according to a theorem of Longoni-Salvatore \cite{longoni-salvatore}, the homotopy type of $\Conf_2$ distinguishes lens spaces that are homotopy equivalent but not homeomorphic. This lack of homotopy invariance is a subtle feature of the unstable homotopy type of configuration spaces, disappearing after sufficient suspension---see \cite{aouina-klein} and \cite{knudsen}.

A second interpretation of this question is as follows. Supposing that we are able to decompose $M$ in some fashion, is there a corresponding decomposition at the level of configuration spaces? For example, it is easy to see that \[\Conf_k(M\amalg N)\cong \coprod_{i+j=k}\Conf_i(M)\times\Conf_j(N)\times_{\Sigma_i\times\Sigma_j}\Sigma_k.\] This seeming triviality carries within it the seeds of deeper facts about the behavior of configuration spaces under collar gluings---see \cite{mcduff}, \cite{boedigheimer}, and \cite{ayala-francis}, for example.

In this note, we broaden this line of inquiry to encompass a different kind of decomposition. Specifically, we seek to characterize the configuration spaces of the product manifold $M\times N$ in terms of the configuration spaces of the factors. In order to address this problem, as with so many problems relating to configuration spaces, it will be profitable to invest in higher structures.

\subsection{Operads and additivity} The configuration space of $k$ points in $\mathbb{R}^m$ has the homotopy type of the space $\op C_m(k)$ of $k$ disjoint \emph{little $m$-cubes}, where a little cube is a rectilinear {self-embedding of $(-1,1)^m$}. Since the composite of two rectilinear embeddings is again rectilinear, this collection of spaces carries a rich algebraic structure, that of the \emph{little cubes operad} $\op C_m$ of Boardman-Vogt \cite{boardman-vogt:htpyeverything} and May \cite{may2}.

In fact, there is a whole menagerie of operads, the $E_m$-operads, with the configuration spaces of $\mathbb{R}^m$ as their underlying homotopy types---the $m$-dimensional little disks operad, Fulton-MacPherson operad, and Steiner operad, to name a few. Each of these models enjoys its own combination of features and drawbacks arising from the specifics of the geometry involved. 

For example, the little cubes operads in different dimensions may be related by the observation that the product of a little $m$-cube and a little $n$-cube is a little $(m+n)$-cube. In the language of operads, this geometric fact is expressed as the existence of a map of operads \[\iota \colon \op C_m\otimes\op C_n\to \op C_{m+n}\] where $\otimes$ denotes the \emph{Boardman-Vogt tensor product} \cite{boardman-vogt:lnm} codifying the concept of interchangeable operad structures. In fact, according to the ``additivity'' theorem of Dunn \cite{dunn} and Brinkmeier \cite{brinkmeier}, this map is a weak equivalence of operads. Thus, after passing to the operadic context, we recover the configuration spaces of the product manifold $\mathbb{R}^{m+n}=\mathbb{R}^m\times\mathbb{R}^n$ from those of the factors $\mathbb{R}^m$ and $\mathbb{R}^n$. 

\subsection{Modules and configuration spaces} From the operadic point of view, the homotopy types of the configuration spaces of a more general manifold $M$ are organized by a right $\op C_m$-module $\op C_M$, at least if $M$ is parallelizable. There is a notion of tensor product of operadic modules paralleling the Boardman-Vogt tensor product, which was constructed by the first two authors in \cite{dwyer-hess:bv}, and it is natural to ask: are the configuration spaces of a more general product manifold determined by those of its factors in terms of this lifted Boardman-Vogt tensor product of modules?

Our main result answers this question in the affirmative.

\begin{theorem}\label{thm:main}
Let $M$ be an $m$-manifold and $N$ an $n$-manifold. A choice of trivialization of each tangent bundle determines an isomorphism \[\Ho(\iota^*)(\op C_{M\times N})\cong \op C_M\otimes^\mathbb{L} \op C_N\] in the homotopy category of right $\op C_{m}\otimes \op C_{n}$-modules, where $\iota^*$ denotes the functor pulling back module structure along the operad map $\iota\colon \op C_m\otimes\op C_n\to \op C_{m+n}$ and $\otimes^\mathbb{L}$  the left derived Boardman-Vogt tensor product of modules.
\end{theorem}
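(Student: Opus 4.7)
The plan is to construct an explicit geometric comparison map, promote it to a map of right $\op C_m\otimes\op C_n$-modules via the universal property of the Boardman--Vogt tensor product of modules, and then show that its derived version is a weak equivalence by reducing to the Dunn--Brinkmeier additivity of the operads $\op C_m$ themselves.

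Using the chosen trivialization of $TM$, I identify $\op C_M(k)$ with the space of rectilinear embeddings $\coprod_k(-1,1)^m\hookrightarrow M$, with right $\op C_m$-action by composition, and analogously for $\op C_N$ and $\op C_{M\times N}$. Cartesian product $(\phi,\psi)\mapsto \phi\times\psi$ then provides a natural map of symmetric sequences $\op C_M\boxtimes\op C_N\to \iota^*\op C_{M\times N}$. Because $\iota$ sends $c\otimes d$ to $c\times d$, this map intertwines the two commuting right actions on the source with the pulled-back right $\op C_m\otimes\op C_n$-action on the target, so by the universal property of the tensor product of right modules from \cite{dwyer-hess:bv} it factors through a canonical map of right $\op C_m\otimes\op C_n$-modules
\[
\mu\colon \op C_M\otimes\op C_N\longrightarrow \iota^*(\op C_{M\times N}).
\]
Precomposing with a cofibrant replacement yields the derived map $\tilde\mu\colon \op C_M\otimes^{\mathbb L}\op C_N\to \iota^*(\op C_{M\times N})$, and it remains to show that $\tilde\mu$ is a weak equivalence.

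I would argue this in two stages. In the base case $M=\mathbb R^m$, $N=\mathbb R^n$, the modules $\op C_M$ and $\op C_N$ are the operads $\op C_m$ and $\op C_n$ acting on themselves, so after an appropriate cofibrant replacement the derived tensor product of modules specializes to the Boardman--Vogt tensor product of operads and $\tilde\mu$ becomes the additivity equivalence $\op C_m\otimes\op C_n\to \op C_{m+n}$ of \cite{dunn, brinkmeier}. For general $M$ and $N$, I would present $\op C_M$ and $\op C_N$ in the homotopy category of right modules as homotopy colimits indexed by categories of finite disjoint unions of Euclidean charts in $M$ and $N$, arranging the indexing so that the obvious product functor realizes $\op C_{M\times N}$ as the corresponding homotopy colimit on the target side. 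Since the derived Boardman--Vogt tensor product of modules commutes with homotopy colimits in each variable, $\tilde\mu$ is then a homotopy colimit of the base-case equivalences, hence itself an equivalence.

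The principal obstacle is the second stage: producing presentations of $\op C_M$ and $\op C_N$ in their respective homotopy categories of right modules whose product compatibly recovers a presentation of $\op C_{M\times N}$, and verifying that the comparison map $\tilde\mu$ respects all of this structure at the level of right $\op C_m\otimes\op C_n$-modules and not merely as symmetric sequences. The parallelizability hypothesis enters essentially here, guaranteeing that framings assemble naturally under taking products. A clean implementation is through a factorization-homology-style description of $\op C_M$ as the derived left Kan extension of $\op C_m$ along the inclusion of a suitable full subcategory of Euclidean charts, under which the required compatibility becomes a chart-by-chart application of the operad-level additivity map.
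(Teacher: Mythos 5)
Your plan matches the paper's strategy in all essentials: build the comparison map from Cartesian products of embeddings via the universal property of the lifted tensor product, verify the base case on disjoint unions of Euclidean spaces using Dunn--Brinkmeier additivity, and globalize by writing $\op C_M$ as a homotopy colimit over disks and invoking that $\otimes^{\mathbb L}$ commutes with homotopy colimits in each variable. The paper executes exactly this, though in greater generality (via dilation representations, skew little cubes, and the $G$-framed embedding spaces of Andrade) and with the model-categorical underpinnings spelled out.

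Two small comments on where your ``principal obstacle'' is resolved in the paper, which you may find instructive. First, the base case needed for the colimit argument is not $M=\mathbb R^m$, $N=\mathbb R^n$ alone but arbitrary finite disjoint unions of Euclidean spaces, since these are the objects of the Weiss cover $\Disk(M)\times\Disk(N)$; the paper handles this (Theorem \ref{thm:cube model}(2)) by exhibiting natural weak equivalences $\Sigma_k\circ\op C_m\xrightarrow{\simeq}\op C_{\amalg_k\mathbb R^m}$ from free modules and checking the requisite square of free modules commutes, after which the equivalence on products of disjoint unions reduces to the operadic additivity map $\iota$ together with the identity $\Sigma_k\Box\Sigma_\ell\cong\Sigma_{k\ell}$. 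Second, the globalization step is implemented not through a bare left Kan extension but through the notion of a \emph{complete Weiss cover} and the \emph{multi-locality} of the functor $U\mapsto\op C_U$ (Section \ref{section:multilocality}), with the key point that $\Disk(M)\times\Disk(N)$ is a complete Weiss cover of $M\times N$ (Lemma \ref{lem:complete weiss}); multi-locality of $\op C_{(-)}$ is deduced aritywise from that of $\Conf_k^G$. Your framing of $\op C_M$ as a derived left Kan extension of $\op C_m$ along disk charts is morally the same as this, and the commutation of $\otimes^{\mathbb L}$ with homotopy colimits is guaranteed by the Quillen adjunction of Proposition \ref{prop:tensor with cofibrant}, which is the other piece of real technical work the paper supplies. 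In short: your outline is correct and follows the paper's route; the content you flagged as remaining is precisely what Sections \ref{section:bv} and \ref{section:multilocality} and Theorem \ref{thm:cube model} provide.
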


This statement is somewhat imprecise; in fact, we require a choice of structure on the tangent bundle that can roughly be described as a trivialization up to dilation. See Theorem \ref{thm:fancy main} for a precise statement.

The broad strategy of the proof is to globalize the local equivalence supplied by the additivity theorem over the coordinate patches of a general manifold. We remark that part of the technical work required is verifying that the tensor product of right modules is homotopically well-behaved enough to admit a derived functor.

\subsection{Structured configuration spaces} In studying non-parellelizable manifolds, it is natural to consider instead, for a manifold $M$ equipped with a reduction to $G$ of the structure group of $TM$, the corresponding principal $G^k$-bundle $\Conf_k^G(M)\to \Conf_k(M)$, which we call the $k$th \emph{$G$-framed configuration space} of $M$. {We introduce a new family of operads, the \emph{skew little cubes} operads $\op C_m^G$,} and we conjecture that these too are additive in the sense that $\op C_m^G\otimes \op C_n^H$ and  $\op C_{m+n}^{G\times H}$ are weakly equivalent as operads. Assuming this conjecture, we prove a version of Theorem \ref{thm:main} asserting that the corresponding modules of structured configuration spaces are also additive---see Theorem \ref{thm:fancy main}.

\subsection{Future directions} Our work points to a number of avenues of research, which we hope to pursue in subsequent work.
\begin{itemize}
\item \emph{Spaces of embeddings}. From the point of view of embedding calculus \cite{debrito-weiss}, Theorem \ref{thm:main} implies a kind of ``change of rings'' equivalence at the level of Taylor approximations to the functor of embeddings into a fixed target, expressed in terms of the divided powers functor investigated by the first two authors \cite{dwyer-hess:bv}. What does this equivalence mean in terms of the geometry of embeddings of product manifolds?
\item \emph{Rational models}. With certain restrictions on $M$ in place, the rational homotopy types of the configuration spaces of $M$ as right modules admit explicit models \cite{idrissi, campos-willwacher}. How does the equivalence of Theorem \ref{thm:main} interact with these models? 
\item \emph{Equivariant additivity}. Does Conjecture \ref{conj:additivity} on the additivity of the skew little cubes operads hold?
\end{itemize}

\subsection{Relation to previous work} The ideas of Section \ref{section:multilocality} are drawn from the theory of factorization homology \cite{ayala-francis}, although we do not employ any of its formal machinery. The Boardman-Vogt tensor product of right modules is a special case of the tensor product of bimodules investigated by the first two authors \cite{dwyer-hess:bv}.

{Our results bear a family resemblance to those of} \cite{debrito-weiss:product} (see also \cite[5.4.5.5]{lurie2}); however, we know of no formal connection between them.

\section{Operadic reminders}

In this section we recall and fix notation for various operadic concepts used throughout this article, in particular the Boardman-Vogt tensor product of simplicial or topological operads. We assume that the reader already has a working knowledge of the theory of operads and their right modules. Relevant introductory references include \cite[I.5-7,\, II,\,III.11-12]{fresse}, \cite[5]{loday-vallette}, and \cite{markl-shnider-stasheff}.

\subsection{Operads and modules}\label{sec:operads and modules} Throughout this section, $(\V,\otimes, \map_{\V}, I)$ denotes a cocomplete, closed, symmetric monoidal category having an initial object. The examples of greatest interest are the Cartesian monoidal categories $\S$ of simplicial sets and $\cat{Top}$ of compactly generated weak Hausdorff spaces.

We write $\seq_\Sigma(\V):=\V^{\Sigma^{op}}$ for the category of symmetric sequences in $\V$, where $\Sigma$ denotes the groupoid of finite sets and bijections, and we employ the standard exponential notation $\V^{\cat C}$ for the category of functors from $\cat C$ to $\V$. A symmetric sequence $\op X$ is determined by a collection $\{\op X(k)\}_{k\geq0}$ of objects in $\V$ equipped with {right} symmetric group actions, where $\op X(k):=\op X(\{1,\ldots, k\})$ is the \emph{arity $k$ component} of $\op X$. Objects of $\V$ are naturally identified with symmetric sequences concentrated in arity 0, i.e., in which all higher arity entries are the initial object with its unique action.

When $\V$ is a model category, we shall say that a map of symmetric sequences is a weak equivalence or a fibration if it is so pointwise (note that this model structure differs from that of \cite{rezk}). In favorable circumstances---for example, when $\V$ is cofibrantly generated---these weak equivalences and fibrations determine a model structure on $\seq_\Sigma(\V)$ \cite[11.6.1]{hirschhorn}. In the presence of such a model structure, we refer to the cofibrations in this model structure as $\Sigma$-\emph{cofibrations}, and to the cofibrant objects as $\Sigma$-\emph{cofibrant.}

The category $\seq_\Sigma(\V)$ is naturally symmetric monoidal under the \emph{graded tensor product}, specified by \[(\op X\odot \op Y)(k)=\coprod_{i+j=k}\op X(i)\otimes \op Y(j)\otimes_{\Sigma_i\times\Sigma_j}\Sigma_k.\] In addition, there is the \emph{composition product}, which may defined in terms of the graded tensor product by the formula \[\op X\circ \op Y=\coprod_{\ell\geq0} \op X(\ell)\odot_{\Sigma_\ell}\op Y^{\odot \ell}.\] The composition product determines a second (non-symmetric) monoidal structure on $\seq_\Sigma(\V)$ for which the unit $\mathcal J$ is the symmetric sequence that is the unit in $\V$ in arity 1 and the initial object in $\V$ in all other arities.

\begin{definition}
An \emph{operad} in $\V$ is a monoid in $(\seq_\Sigma(\V),\circ, \mathcal J)$.
\end{definition}

A map of operads is simply a map of monoids, and we obtain in this way a category $\cat{Op}(\V)$. When $\V$ is a model category, we say that a map of operads is a weak equivalence if the underlying map of symmetric sequences is so. In favorable circumstances---for example, when $\V=\S$ or $\V=\cat{Top}$---these weak equivalences are the weak equivalences of a model structure on $\cat{Op}(\V)$ (see \cite{berger-moerdijk}).

\begin{example}\label{example:endomorphism}
An object $V\in \V$ determines a canonical operad $\End_\V(V)$ in $\V$, the \emph{endomorphism operad} of $V$. As a symmetric sequence, \[\End_\V(V)(k):=\Hom_\V(V^{\otimes k}, V),\] and the components of the monoid structure map are given by composition in $\V$.
\end{example}

For an operad $\mcP$, we write $\Mod{\mcP}(\V)$ for the $\V$-category of right $\mcP$-modules (here, and throughout, a $\V$-\emph{category} is a $\V$-enriched category, and we likewise use the standard terminology $\V$-functor, $\V$-natural transformation, and so on). The free right $\mcP$-module on the symmetric sequence $\op X$ is $\op X\circ\mcP$. Since we have no cause to consider other types of module structure in this work, we shall often refer simply to $\mcP$-modules.

\subsection{Modules as presheaves} \label{section:modules as presheaves}
We now recall the standard correspondence between right $\op P$-modules and presheaves on a certain category associated to $\op P$---see \cite[3]{arone-turchin}, for example. Denoting by $\F$ the category of finite sets, we write $\F(\op P)$ for the $\V$-category with objects the objects of $\F$ and \[\Hom_{\F(\op P)}(X,Y)= \coprod_{f:X\to Y} \bigotimes_{y\in Y}\op P \big( f^{-1}(y)\big),\] where the coproduct is indexed on the set $\Hom_\F(X,Y)$. Composition is defined in the obvious manner using the operad structure of $\mcP$. 

\begin{lemma}[{\cite[3.3]{arone-turchin}}]\label{lem:right-mod} The category $\Mod{\op P}(\V)$ of right $\op P$-modules in $\V$ is equivalent as a $\V$-category to $\V^{\F(\op P)}$, the $\V$-category of $\V$-functors and $\V$-natural transformations from $\F(\op P)$ to $\V$ (cf. section \ref{sec:ext-prod}).
\end{lemma}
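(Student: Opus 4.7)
The plan is to unpack definitions and construct mutually inverse $\V$-functors between $\Mod{\op P}(\V)$ and $\V^{\F(\op P)}$. The category $\F(\op P)$ is evidently engineered so that its morphism objects encode exactly the data of a right $\op P$-action, and the proof is essentially bookkeeping built around this observation.

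First, I would construct a $\V$-functor $\Phi \colon \Mod{\op P}(\V) \to \V^{\F(\op P)}$. Given a right $\op P$-module $\op M$, set $\Phi(\op M)(X) := \op M(X)$, where a symmetric sequence is extended from the skeleton $\{1, \ldots, k\}$ to all of $\F$ in the standard way. On morphism objects, the summand of $\Hom_{\F(\op P)}(X, Y)$ indexed by a set map $f \colon X \to Y$ is $\bigotimes_{y \in Y} \op P\big(f^{-1}(y)\big)$, and the right $\op P$-module structure of $\op M$ furnishes the corresponding map between the appropriate components of $\op M$ (with the direction dictated by the variance conventions fixed by the paper).

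Next, I would construct an inverse functor $\Psi \colon \V^{\F(\op P)} \to \Mod{\op P}(\V)$. Given a $\V$-functor $F$, define a symmetric sequence by $\Psi(F)(k) := F(\{1, \ldots, k\})$, with $\Sigma_k$-action induced by the inclusion of $\F$ into $\F(\op P)$ obtained by equipping each bijection with the unit of $\op P$ at each fiber. The right $\op P$-action is then extracted from the morphisms of $\F(\op P)$ whose underlying set map is the canonical surjection $\coprod_{i=1}^k \{1, \ldots, n_i\} \to \{1, \ldots, k\}$, decorated with a tuple in $\bigotimes_i \op P(n_i)$.

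Finally, I would verify that $\Phi$ and $\Psi$ are $\V$-enriched and mutually inverse. The unit axiom for a right $\op P$-module corresponds to preservation of identity morphisms in $\F(\op P)$; the associativity axiom for the action corresponds to preservation of composition, which in $\F(\op P)$ is defined precisely by operadic composition in $\op P$ together with composition of set maps in $\F$; the $\Sigma_k$-equivariance of the action matches naturality with respect to the bijections in $\F \hookrightarrow \F(\op P)$. The main obstacle is purely bookkeeping: fixing conventions about variance, translating between indexing by natural numbers and by arbitrary finite sets, and then systematically matching each module axiom with the corresponding functoriality axiom. Nothing deeper is needed beyond this translation, and since the statement is cited verbatim from Arone--Turchin, I would primarily check that the definitions in the present paper agree with those of the reference so that the cited equivalence applies as stated.
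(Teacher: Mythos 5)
The paper offers no proof of this lemma; it is cited directly from Arone--Turchin~\cite[3.3]{arone-turchin}, so there is no ``paper's approach'' to compare against. Your proposal is the standard unwinding argument that the cited reference itself carries out: $\F(\op P)$ is engineered so that a presheaf on it is exactly the data of a symmetric sequence together with a right $\op P$-action, and one checks that the unit, associativity, and equivariance axioms of a right module correspond to preservation of identities, preservation of composites, and compatibility with the embedded $\Sigma^{op}$, respectively. Your sketch is correct in outline and in all essentials. The one point that deserves care --- which you do flag --- is variance: with the paper's convention $\seq_\Sigma(\V)=\V^{\Sigma^{op}}$ and $\Hom_{\F(\op P)}(X,Y)=\coprod_{f:X\to Y}\bigotimes_{y\in Y}\op P(f^{-1}(y))$, a morphism of $\F(\op P)$ labelled by $f:X\to Y$ should act on a module $\op M$ by $\op M(Y)\otimes\bigotimes_{y}\op P(f^{-1}(y))\to \op M(X)$, so that the functor $\zeta_{\op P}:\Sigma^{op}\to\F(\op P)$ of Remark~\ref{remark:free-mod} restricts the module structure to the underlying symmetric sequence. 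When writing this out you should make sure this orientation is consistent with whatever $\V^{\F(\op P)}$ is taken to mean (covariant functors with the arrows of $\F(\op P)$ read appropriately, or equivalently presheaves on the opposite category). Beyond fixing that convention once and for all, nothing further is missing from your argument.
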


Henceforth, abusing notation slightly, we identify $\Mod{\op P}(\V)$ and $\V^{\F(\op P)}$. 

\begin{example}\label{example:yoneda}
For any object $V\in \op V$, there is a canonical $\V$-functor $\F(\End_\V(V))\to \V$ sending $I$ to $V^{\otimes I}$. Restricting the $\V$-enriched Yoneda embedding along this functor, we obtain the $\V$-functor \[\HHom_\V(V,-):\V\to \Mod{\End_\V(V)}(\V),\] which, when evaluated on an object $W$, returns the module given in arity $k$ by $\Hom_\V(V^{\otimes k}, W)$, with structure maps given by composition in $\V$. Note that $\HHom_\V(V,-)(1)=\Hom_\V(V,-)$.
\end{example}

The construction $\F(\mcP)$ is functorial in an obvious way, so a map $\varphi:\mcP\to \op Q$ of operads gives rise to a \emph{base change adjunction} \[\adjunct{\Mod{\op P}(\V)}{\Mod{\op Q}(\V)}{\vp_{!}}{\vp^{*}}\] by restriction and left Kan extension along $\F(\varphi)$. As we will see in section \ref{section:htpy}, under mild cofibrancy conditions, base change is homotopically meaningful.

\begin{remark}\label{remark:free-mod}  The unit map of an operad $\op P$ gives rise to a $\V$-functor $\zeta_{\op P}: \Sigma ^{op} \to F(\op P)$ that is identity on objects, which in turn induces an adjuction
$$\adjunct {\seq(\V)}{\Mod{\op P}(\V)} {(\zeta_{\op P})_!} {\zeta_{\op P}^{*}}.$$
The left adjoint $(\zeta_{\op P})_!$ is precisely the free $\op P$-module functor from the presheaf perspective.
\end{remark}

\subsection{Simplicial and topological operads} 
We now establish some useful notation in the cases $\V= \S$ and $\V= \cat {Top}$. In view of the formula \[(\op X\circ \op Y)(k)=\coprod_{\sum_{i=1}^\ell k_i=k} \op X(\ell)\times_{\Sigma_\ell}\bigg(\prod_{i=1}^\ell \op Y(k_i)\times_{\prod_{i=1}^\ell\
\Sigma_i}\Sigma_k\bigg),\] we may represent a typical element $(\op X\circ \op Y)(k)$ by a tuple $(x; y_{1},...,y_{l};\tau)$, where $x\in \op X(\ell)$, $y_{i}\in \op Y(k_{i})$, and $\tau \in \Sigma_{k}$, subject to the relations 
\[(x; y_{1},...,y_{\ell};\tau _{1}\oplus \cdots \oplus \tau_{\ell})\sim (x; y_{1}\cdot \tau_{1}^{-1}, \cdots, y_{\ell}\cdot \tau_{\ell}^{-1};\id)\]
and
\[(x\cdot \sigma^{-1}; y_{1},...,y_{\ell};\id)\sim (x; y_{\sigma(1)},...,y_{\sigma(\ell)};\id)\]
for all $\sigma \in \Sigma_\ell$ and $\tau_{i}\in \Sigma_{k_{i}}$, where $\tau _{1}\oplus \cdots \oplus \tau _{\ell}\in\Sigma_{k}$ is the block permutation specified by
\[(\tau_{1}\oplus \cdots \oplus \tau _{\ell})(r)=\tau_{j}\big(r-\sum_{i=1}^{j-1}k_{i}\big) + \sum_{i=1}^{j-1}k_{i}\quad\text{for all}\quad  \sum_{i=1}^{j-1}k_{i}<r\leq  \sum_{i=1}^{j}k_{i}.\] In this representation, the right action of $\Sigma_{k}$ on $\op X\circ \op Y$ is given by 
\[(x; y_{1},...,y_{\ell};\id)\cdot \tau=(x; y_{1},...,y_{\ell};\tau).\]

If $\op P$ is an operad with multiplication map $\mu:\op P\circ \op P\to \op P$ and $p\in \op P(k)$, then we write 
\[p(p_{1},...,p_{k}):=\mu (p;p_{1},...,p_{k};\id)\in \op P(n)\]
for any $p_{i}\in \op P(k_{i})$ with $\sum_{i=1}^\ell k_{i}=k$.
Note that, since $\mu$ is equivariant, it is specified by its values on elements of $\op P\circ \op P$ with representatives of this form.

\subsection{The Boardman-Vogt tensor product}\label{sec:bv}
We consider only simplicial and topological operads in this section, writing $\cat {Op}$ for $\cat {Op}(\S)$ or $\cat{Op}(\cat {Top})$.

The Boardman-Vogt tensor product \cite{boardman-vogt:lnm}
$$-\otimes -: \cat {Op} \times \cat {Op} \to \cat {Op},$$
which endows the category $\cat {Op}$ with a symmetric monoidal structure, codifies interchanging algebraic structures.  For  all $\op P, \op Q\in \cat {Op}$, a $(\op P\otimes \op Q)$-algebra can be viewed as a $\op P$-algebra in the category of $\op Q$-algebras or as a $\op Q$-algebra in the category of $\op P$-algebras.

\begin{definition}\label{definition:bv-op}\cite{boardman-vogt:lnm}, \cite{dunn} The \emph{Boardman-Vogt tensor product} of operads $\op P$ and $\op Q$ is the operad $\op P\otimes \op Q$ that is the quotient of the coproduct $\op P\coprod \op Q$ of operads by the equivalence relation generated by
$$(p;\underbrace{q,..,q}_{k};\id) \sim (q;\underbrace{p,...,p}_{l};\tau_{k,l})$$
for all $p\in \op P(k)$ and $q\in \op Q(l)$, where $\tau_{k,l}\in \Sigma_{kl}$ is the transpose permutation that ``exchanges rows and columns'', i.e., for all $1\leq m=(i-1)l+j\leq kl$, where $1\leq i\leq k$ and $1\leq j\leq l$, 
$$\tau_{k,l}(m)=(j-1)k+i.$$
\end{definition}

\begin{notation}\label{notn:tensor} We let $p\otimes q$ denote the common equivalence class of  
$$(p;\underbrace{q,..,q}_{k};\id)\quad\text{and}\quad(q;\underbrace{p,...,p}_{l};\tau_{k,l})$$ in $(\op P\otimes \op Q)(kl)$
for all $p\in \op P(k)$ and $q\in \op Q(l)$.
\end{notation}

\begin{remark}\label{rmk:switch} In terms of the notation above, if $p\in \op P(k)$ and $q\in \op Q(l)$, then 
$$p\otimes q = q\otimes p \cdot \tau_{k,l}.$$
\end{remark}

\section{The lifted Boardman-Vogt tensor product}\label{section:bv}

In this section, we introduce a version of the Boardman-Vogt tensor product at the level of right modules. A more general tensor product, valid for operadic bimodules, is defined in \cite{dwyer-hess:bv}, but a simplified construction of this operation is available when the operads acting on the left are all trivial. This simplified approach is convenient for homotopy theoretic applications; in particular, it allows for an easy proof of the key result that tensoring with a cofibrant module is a left Quillen functor, which appears below as Proposition \ref{prop:tensor with cofibrant}.

Throughout this section, $(\V,\otimes, \map_{\V}, I)$ denotes a closed, symmetric monoidal category, and the hom objects in a $\V$-category $\E$ are denoted $\map_{\E}(-,-)$.

\subsection{External products for enriched categories}\label{sec:ext-prod}

Let $\A$ and $\C$ be $\V$-categories, and assume that $\C$ is $\V$-equivalent to a small $\V$-category. As explained in \cite[2.1-2]{kelly}, there is a $\V$-category $\A^{\C}$ with objects $\V$-functors from $\C$ and morphism objects given by the end formula
$$\map_{\A^{\C}}(F,F')=\int_{C\in \ob \C}\map_{\A}(FC, F'C).$$

Constructions of the following sort come up naturally in our work on operads.

\begin{definition}\label{definition:div-power} Let $F: \C\to \V$ and $F':\D \to \V$ be $\V$-functors. The \emph{external tensor product} of $F$ and $F'$ is the functor
$$F\boxtimes F': \C \times \D \to \V$$
defined on objects by $(F\boxtimes F')(c,d)= F(c) \otimes F'(d)$ and similarly on hom objects. 

Let $H:\C \times \D \to \V$ be a $\V$-functor, and let $d\in \ob \D$.  The \emph{$d$-divided power of $H$} is the $\V$-functor
$$\gamma^{\C}_{d}(H): \C \to \V$$
that is defined on objects by $\gamma^{\C}_{d}(H)(c)=H(c,d)$ and similarly on hom objects.  The \emph{$c$-divided power of $H$} for any $c\in \ob \C$,
$$\gamma^{\D}_{c}(H): \D \to \V,$$
is defined analogously.
\end{definition}

The proof of the lemma below is an easy  exercise in enriched category theory, using the description of the $\V$-enrichment of functor categories given above.

\begin{lemma}\label{lem:funcat-adjoin}  Let $\V$ be a closed, symmetric monoidal category, and let $\C$ and $\D$ be small $\V$-categories. For any $\V$-functor $F': \D \to \V$, there is an adjunction
$$\adjunct{ \V^{\C}}{\V ^{\C \times \D},}{-\boxtimes F'}{\widetilde{\map}_{\D}(F',-)}$$
where the right adjoint is specified on $H\in \ob \V ^{\C \times \D}$ by
$$\widetilde{\map}_{\D}(F',H)(c)=\map_{\V^{\D}}\big(F', \gamma^{\D}_{c}(H)\big).$$
\end{lemma}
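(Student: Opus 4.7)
The plan is to exhibit the desired adjunction as a chain of natural isomorphisms in $\V$, obtained by combining the end formula for $\V$-enriched functor categories, the interchange (Fubini) property for ends, and the closed structure of $\V$.

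First I would verify that the putative right adjoint $\widetilde{\map}_\D(F',-)$ is well-defined as a $\V$-functor $\V^{\C\times\D}\to\V^\C$. For fixed $H\in\V^{\C\times\D}$, the object $\widetilde{\map}_\D(F',H)\in\V^\C$ assigns to $c\in\ob\C$ the end
$$\widetilde{\map}_\D(F',H)(c) = \map_{\V^\D}\bigl(F', \gamma^\D_c(H)\bigr) = \int_{d\in\ob\D} \map_\V\bigl(F'(d), H(c,d)\bigr),$$
and its $\V$-functoriality in $c$ follows from the naturality of the divided-power construction $c\mapsto\gamma^\D_c(H)$ together with the universal property of the defining end. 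Functoriality of $\widetilde{\map}_\D(F',-)$ in $H$ is similar.

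Next, I would compute both sides of the proposed adjunction using the end formula for $\V$-enrichment of functor categories. For $F\in\V^\C$ and $H\in\V^{\C\times\D}$ we have
$$\map_{\V^{\C\times\D}}(F\boxtimes F', H) = \int_{(c,d)\in\ob(\C\times\D)} \map_\V\bigl(F(c)\otimes F'(d),\, H(c,d)\bigr).$$
By Fubini for ends this rewrites as an iterated end over $c$ and then $d$, and by the tensor--hom adjunction in $\V$ applied objectwise we obtain
$$\int_{c} \int_{d} \map_\V\bigl(F(c),\, \map_\V(F'(d), H(c,d))\bigr).$$
Since $\map_\V(F(c),-)$ is a right adjoint, it preserves ends, so we may pull the inner end through:
$$\int_{c} \map_\V\Bigl(F(c),\, \int_{d} \map_\V(F'(d), H(c,d))\Bigr) = \int_{c} \map_\V\bigl(F(c),\, \widetilde{\map}_\D(F',H)(c)\bigr),$$
and the last expression is exactly $\map_{\V^\C}\bigl(F,\, \widetilde{\map}_\D(F',H)\bigr)$. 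Naturality in $F$ and $H$ follows from naturality of each step.

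The main obstacle, though still routine, will be carefully verifying the $\V$-functoriality of $\widetilde{\map}_\D(F',-)$ and its value $c\mapsto\widetilde{\map}_\D(F',H)(c)$, since this requires tracking how ends interact with the enriched structure maps of $\C$ and $\D$. Once this is in place, the rest of the argument is the standard enriched tensor--hom manipulation indicated by the lemma's preamble.
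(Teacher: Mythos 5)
Your proof is correct and is exactly the ``easy exercise in enriched category theory'' the paper alludes to: unwind $\map_{\V^{\C\times\D}}(F\boxtimes F',H)$ via the end formula, apply Fubini and the tensor--hom adjunction objectwise, and pull the $d$-end through $\map_\V(F(c),-)$ since the latter preserves ends. The paper gives no proof beyond pointing to the end description of the enriched hom, so there is nothing to compare; your write-up fills that in along the expected lines, and your closing remark that the remaining work is verifying $\V$-functoriality of the constructions is the right assessment of where the (routine) effort lies.
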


If $\V $ is cocomplete, a $\V$-functor $\Phi:\C\times\D \to \E$ between small $\V$-categories induces an adjunction
$$\adjunct{\V^{\C\times \D}}{\V^{\E},}{\Phi_!}{\Phi^{*}}$$
where $\Phi_!$ denotes the enriched left Kan extension along $\Phi$ \cite[4.50]{kelly}. We will be interested in particular examples of the resulting composite adjunction
$$\longadjunct{ \V^{\C}}{\V ^{\E}.}{\Phi_!\circ(-\boxtimes F')}{\widetilde{\map}_{\D}\big(F',\Phi^{*}(-)\big)}$$ The motivating example for this construction, which arose in \cite{dwyer-hess:bv}, is formulated as follows.

\begin{example}\label{ex:matrix-mon} Let $\nu:\Sigma \times \Sigma \to \Sigma$ be the functor specified by $\nu(I,J)=I\times J$ for all $I,J\in \ob \Sigma $, while
$\nu_{I,J}:\Sigma_{I}\times \Sigma_{J}\to \Sigma_{I\times J}$
is the homomorphism given by
$$\nu_{I,J}(\sigma, \tau)(i,j)=\big(\sigma(i), \tau (j)\big).$$ The \emph{matrix tensor product} on $\seq_{\Sigma}(\V)$ \cite[1.3]{dwyer-hess:bv}, denoted $\square$, is the composite
$$-\square-:\seq_{\Sigma}(\V)\times \seq_{\Sigma}(\V) \xrightarrow {-\boxtimes-} \V^{\Sigma^{op}\times \Sigma^{op} }\xrightarrow {(\nu^{op})_!}\seq_{\Sigma}(\V).$$ Explicitly, this tensor product is given by the formula
$$(\op X \square \op Y)(k)=\coprod_{ij=k}\big(\op X(i)\times \op Y(j)\big)\times _{\Sigma_{i}\times \Sigma _{j}} \Sigma_{k}$$
for symmetric sequences $\op X$ and $\op Y$. The unit for the matrix monoidal structure is the symmetric sequence $\op I$ that is the unit $I$  in arity 1 and the initial object in all other arities.

\end{example}

\subsection{The homotopy theory of external products}\label{section:htpy}
Let $\M$ be a monoidal model category that is cofibrantly generated in the enriched sense \cite[13.4]{riehl}, and let $\varnothing$ denote its initial object.  By \cite[13.4.5, 13.5.2]{riehl}, examples of such monoidal model categories include the category $\S$ of simplicial sets with Cartesian product and the Kan-Quillen model structure, the category $\cat{Top}$ with the $k$-ified Cartesian monoidal structure and the Serre model structure, and the category of chain complexes over a commutative ring, equipped with its Hurewicz-type model structure (where the weak equivalences are chain homotopy equivalences) and the usual tensor product. 

For any $\M$-enriched category $\C$, let $\C_{\delta}$ denote the category with the same objects as $\C$ and with 
$$\C_{\delta}(C,C')=\begin{cases} I&: C=C'\\ \varnothing&: C\not=C',\end{cases}$$ 
and let $\iota_{\C}:\C_{\delta}\to \C$ denote the obvious functor.  

\begin{theorem}[{\cite[13.5.1]{riehl}}]\label{thm:diag-model-cat}  For any small $\M$-enriched category $\C$, there is a cofibrantly generated, $\M$-model category structure on $\M^{\C}$ that is right-induced by the $\M$-adjunction
$$\adjunct{\M^{\C_{\delta}}}{\M^{\C}}{(\iota_{\C})_!}{\iota_{\C}^{*}},$$
i.e., the fibrations and weak equivalences in $\M^{\C}$ are defined objectwise.
\end{theorem}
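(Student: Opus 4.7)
My plan is to apply Kan's transfer theorem for cofibrantly generated model categories to the $\M$-adjunction $(\iota_\C)_!\dashv\iota_\C^*$ in the statement. Since the hom-objects of $\C_\delta$ are $I$ on the diagonal and $\varnothing$ off of it, an $\M$-functor $\C_\delta\to\M$ is the same thing as an $\ob\C$-indexed family $(X_c)_c$ of objects of $\M$, so $\M^{\C_\delta}$ is the product $\prod_{c\in\ob\C}\M$. This product carries the evident cofibrantly generated model structure with objectwise fibrations and weak equivalences, and generating (trivial) cofibrations $\delta_c(f)$ concentrated at a single object $c\in\ob\C$, where $f$ ranges over the generating (trivial) cofibrations of $\M$. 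The right adjoint $\iota_\C^*$ is simply the evaluation $F\mapsto(F(c))_c$, so right-inducing along it automatically yields the objectwise characterization of fibrations and weak equivalences promised in the statement.

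I would then compute that the generating (trivial) cofibrations of the would-be transferred structure are the maps $(\iota_\C)_!\bigl(\delta_c(f)\bigr)$, which by the enriched coend formula for left Kan extension evaluate at $c'\in\ob\C$ to $\C(c,c')\otimes f$. Kan's transfer theorem now has three hypotheses: cocompleteness of $\M^\C$, smallness of the domains of these generators, and the \emph{acyclicity condition} asserting that transfinite compositions of pushouts of generating trivial cofibrations are weak equivalences. The first is immediate from pointwise cocompleteness, and the second follows from smallness in $\M$ together with the adjunction; since colimits in $\M^\C$ are computed pointwise, the third reduces to the statement that for every $c,c'\in\ob\C$ and every generating trivial cofibration $j\colon A\to B$ of $\M$, the tensor $\C(c,c')\otimes j$ is a trivial cofibration in $\M$, with this class stable under pushout and transfinite composition.

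The principal obstacle is the tensoring assertion just mentioned: the hom-object $\C(c,c')$ is an arbitrary, possibly non-cofibrant object of $\M$, so the ordinary pushout-product axiom alone would not suffice, as the latter requires cofibrancy of both factors. This is exactly the raison d'\^etre of working with an \emph{enriched} notion of cofibrant generation as in \cite[13.4]{riehl}: the definition packages the compatibility between the monoidal structure on $\M$ and its generating sets, so that tensoring with an arbitrary object of $\M$ carries generators to generators of the appropriate class. Assuming this, Kan's transfer theorem produces the claimed cofibrantly generated model structure on $\M^\C$, and the $\M$-enrichment of that structure (the enriched pushout-product axiom, etc.) follows formally from the $\M$-enrichment of $\M$ itself together with the objectwise formula for limits and colimits.
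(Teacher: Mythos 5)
The paper cites this statement from \cite[13.5.1]{riehl} without giving a proof, and your sketch reproduces Riehl's argument: identify $\M^{\C_\delta}$ with a product of copies of $\M$, transfer the model structure along the Kan extension adjunction using the generators $\C(c,-)\otimes i$, and invoke the \emph{enriched} form of cofibrant generation to dispatch the acyclicity condition. You correctly locate the crux at tensoring generating (trivial) cofibrations of $\M$ with possibly non-cofibrant hom-objects $\C(c,c')$, which is precisely what the enriched weak factorization systems of \cite[13.4]{riehl} are designed to accommodate, so the approach matches the cited source.
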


The model structure of Theorem \ref{thm:diag-model-cat} is usually called the \emph{projective model structure} on $\M^{\C}$.   For any operad $\op P$ in $\M$, applying Theorem \ref{thm:diag-model-cat} to the category $\Mod{\op P}(\M)$, in the guise of the presheaf category $\M^{\F(\op P)}$, gives rise to a projective model structure on the category of right $\op P$-modules.

The following easy lemma asserts that the projective model structure behaves well with respect to $\M$-functors in the source.

\begin{lemma}\label{lem:module hocolim}  For any $\M$-functor $\Phi: \C\to \D$, the induced adjunction $$\longadjunct { \M^{\C}}{\M ^{\D}}{\Phi_!}{\Phi^{*}}$$ is a Quillen pair, and $\Ho(\Phi^*)$ preserves homotopy colimits.
\end{lemma}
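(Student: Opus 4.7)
The plan is to verify both assertions using only the pointwise characterization of the projective model structure from Theorem \ref{thm:diag-model-cat}, together with the fact that $\Phi^*$ acts on functors by precomposition.

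For the Quillen pair, I would argue that $\Phi^*$ preserves fibrations and weak equivalences. In the projective model structure on $\M^\D$, a map $\alpha\colon F\to G$ is a fibration (respectively, a weak equivalence) precisely when each $\alpha_d$ is so in $\M$; since $(\Phi^*\alpha)_c = \alpha_{\Phi(c)}$, it follows immediately that $\Phi^*\alpha$ is a pointwise fibration (respectively, weak equivalence) in $\M^\C$. Hence $\Phi^*$ is right Quillen, and $(\Phi_!,\Phi^*)$ is a Quillen pair.

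For preservation of homotopy colimits, the same pointwise reasoning shows that $\Phi^*$ in fact preserves \emph{all} weak equivalences in $\M^\D$, not merely those between fibrant objects; in particular, $\Ho(\Phi^*)$ is obtained simply by applying $\Phi^*$ to representatives, with no fibrant replacement required. Moreover, $\Phi^*$ admits a right adjoint, namely the enriched right Kan extension $\Phi_*$ along $\Phi$, so it preserves all enriched colimits, and it likewise commutes with tensorings by objects of $\M$, both operations being computed pointwise in $\M$.

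Combining these properties, I would express $\hocolim X$ for a small diagram $X\colon \I \to \M^\D$ via the standard Bousfield--Kan bar construction, which is a coend built from enriched colimits and tensorings. Since $\Phi^*$ commutes with these operations on the nose and preserves all weak equivalences, one obtains a natural weak equivalence $\Phi^*(\hocolim X)\simeq \hocolim(\Phi^* X)$, proving that $\Ho(\Phi^*)$ preserves homotopy colimits. The one subtlety is the appeal to an explicit functorial model of $\hocolim$ built only from operations that $\Phi^*$ commutes with; this is standard but could alternatively be circumvented by invoking the general principle that a functor preserving all weak equivalences and all colimits automatically preserves homotopy colimits, a principle that applies here because $\Phi^*$ enjoys both adjoints.
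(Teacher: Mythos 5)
Your first part (the Quillen pair) is exactly the paper's argument: $\Phi^*$ is precomposition, so it preserves objectwise fibrations and weak equivalences, hence is right Quillen. No comment needed there.

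For the preservation of homotopy colimits, your route diverges from the paper's. The paper observes only that $\Phi^*$ preserves all weak equivalences, concludes that $\Ho(\Phi^*)$ is simultaneously the total left and total right derived functor of $\Phi^*$, and then cites a result of Werndli (\cite[2.10]{werndli}) to get commutation with homotopy colimits. You instead give a more hands-on argument by expressing $\hocolim$ via the Bousfield--Kan bar construction and checking that $\Phi^*$ commutes with it on the nose (since precomposition preserves colimits and tensorings objectwise), and you correctly flag the main subtlety: one needs the bar construction to compute the homotopy colimit on both sides, which is a cofibrancy condition that $\Phi^*$ does not obviously preserve in the projective model structure. Your more abstract fallback principle (``a functor preserving all weak equivalences and all colimits automatically preserves homotopy colimits'') is not quite a theorem as stated---preserving colimits and weak equivalences does not by itself force cofibrant replacements of diagrams to be sent to colimit-appropriate diagrams---and the parenthetical ``because $\Phi^*$ enjoys both adjoints'' is the observation doing the real work, which is essentially what the Werndli citation packages. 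So: the paper's route is shorter and cleaner at the cost of an external reference, while yours is more self-contained but would need the cofibrancy bookkeeping spelled out carefully; both lead to the same place.
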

\begin{proof}
The first statement is an immediate consequence of the fact that precomposition preserves objectwise fibrations and weak equivalences. The second follows from the fact that $\Phi^{*}$ preserves all weak equivalences. It follows that the induced functor on homotopy categories $\Ho(\Phi^{*})\colon \Ho(\M ^{\D}) \to \Ho (\M ^{\C})$ is both a total left and a total right derived functor, which implies in particular that $\Ho(\Phi^{*})$ commutes with homotopy colimits \cite[2.10]{werndli}.
\end{proof}

In particular, if $\varphi: \op P \to \op Q$ is a map of operads in $\M$, then the base change adjunction associated to $\varphi$ is a Quillen pair with respect to the projective model structures.  When $\vp$ is a weak equivalence, we can say even more.

\begin{proposition}[{\cite[15.B]{fresse}}] If $\varphi:\op P\to \op Q$ is a {weak equivalence} of operads in $\M$ such that $\op P$ and $\op Q$ are aritywise cofibrant, then the associated base change adjunction \[{\adjunct{\Mod{\op P}(\M)}{\Mod{\op Q}(\M)}{\vp_{!}}{\vp^{*}}}\] is a Quillen equivalence.
\end{proposition}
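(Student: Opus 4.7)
The plan is to apply the standard characterization of Quillen equivalences: since the right adjoint $\varphi^{*}$ is pointwise precomposition and the weak equivalences in $\Mod{\op P}(\M)$ and $\Mod{\op Q}(\M)$ are aritywise, $\varphi^{*}$ preserves and reflects all weak equivalences. Consequently, it suffices to prove that the unit transformation $\eta_{X}\colon X\to \varphi^{*}\varphi_{!}(X)$ is a weak equivalence for every cofibrant $\op P$-module $X$.

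First I would verify the claim on free modules. For a $\Sigma$-cofibrant symmetric sequence $\op K$, Remark \ref{remark:free-mod} gives $X=(\zeta_{\op P})_{!}(\op K)=\op K\circ \op P$, and a direct inspection shows that $\varphi_{!}(\op K\circ\op P)=\op K\circ\op Q$, with unit map equal to $\op K\circ\varphi\colon \op K\circ\op P\to \op K\circ\op Q$. Under the aritywise cofibrancy hypothesis on $\op P$ and $\op Q$ together with $\Sigma$-cofibrancy of $\op K$, the composition product with $\op K$ preserves aritywise weak equivalences between aritywise cofibrant symmetric sequences, so $\op K\circ\varphi$ is aritywise a weak equivalence.

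Next I would promote the result to all cofibrant modules by a transfinite cellular induction. A cofibrant $\op P$-module is a retract of a cell module, i.e., a transfinite composite of pushouts of generating cofibrations, and by Theorem \ref{thm:diag-model-cat} together with Remark \ref{remark:free-mod} the generating cofibrations of $\Mod{\op P}(\M)$ can be taken to have the form $\op K\circ\op P\to \op L\circ\op P$ for generating $\Sigma$-cofibrations $\op K\to \op L$. The key structural fact powering the induction is that $\varphi^{*}$ preserves all colimits (precomposition is pointwise) and $\varphi_{!}$ preserves colimits as a left adjoint, so $\varphi^{*}\varphi_{!}$ preserves pushouts and transfinite composites. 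At each pushout stage one applies the unit natural transformation to the pushout square and then invokes the cube lemma for homotopy pushouts to propagate the equivalence; the cube lemma applies because the cofibrations in the projective model structure are in particular $\Sigma$-cofibrations, so every pushout along such a map is a homotopy pushout aritywise. Transfinite composites and retracts are handled by the same kind of aritywise argument.

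The main obstacle is ensuring that the composition products and pushouts encountered during the cellular induction are homotopically well-behaved. Concretely, the delicate point is that cofibrations in $\Mod{\op P}(\M)$ must restrict to aritywise cofibrations of symmetric sequences, and pushouts of free modules along generating cofibrations must be aritywise cofibrations for the cube lemma to apply; this ultimately reduces to a careful inspection of the free $\op P$-module functor $\op K\mapsto \op K\circ \op P$ and the aritywise cofibrancy of $\op P$, which is precisely where the hypothesis on $\op P$ and $\op Q$ is consumed.
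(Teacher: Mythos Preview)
The paper does not supply its own proof of this proposition; it is stated with a citation to \cite[15.B]{fresse} and left at that. Your plan is the standard argument found in that reference: use that $\varphi^{*}$ creates weak equivalences to reduce to showing the derived unit is an equivalence on cofibrant modules, verify this on free modules via the identification $\varphi_{!}(\op K\circ\op P)\cong\op K\circ\op Q$ together with the fact that $\op K\circ(-)$ preserves aritywise weak equivalences between aritywise cofibrant sequences when $\op K$ is $\Sigma$-cofibrant, and then propagate by cellular induction. So your approach is correct and coincides with the cited source; there is nothing further to compare.
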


Adjunctions of the types considered in section \ref{sec:ext-prod} become homotopically meaningful with respect to the projective model structure.

\begin{proposition}\label{prop:quillen-pair} Let $\C$ and $\D$ be small $\M$-enriched categories. For any cofibrant object $F'$ in $\M^{\D}$, the adjunction 
$$\adjunct { \M^{\C}}{\M ^{\C \times \D}}{-\boxtimes F'}{\widetilde{\map}_{\D}(F',-)}$$
of Lemma \ref{lem:funcat-adjoin} is a Quillen pair with respect to the projective model structures.
\end{proposition}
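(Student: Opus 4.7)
The plan is to verify the Quillen pair condition by showing the right adjoint $\widetilde{\map}_{\D}(F',-)$ preserves (trivial) fibrations, rather than tackling the left adjoint directly. This is natural because fibrations and weak equivalences in the projective model structures are detected objectwise on both sides, which dovetails nicely with the formula $\widetilde{\map}_{\D}(F',H)(c) = \map_{\M^{\D}}(F', \gamma^{\D}_{c}(H))$ supplied by Lemma \ref{lem:funcat-adjoin}.

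The argument factors through two observations. First, for each $c \in \ob \C$, the divided power functor $\gamma^{\D}_{c} \colon \M^{\C\times\D} \to \M^{\D}$ tautologically preserves objectwise (trivial) fibrations, since it is simply restriction along the inclusion $\D \to \C \times \D$ at $c$. Second, because $F'$ is cofibrant in $\M^{\D}$, the enriched hom functor $\map_{\M^{\D}}(F',-) \colon \M^{\D} \to \M$ is a right Quillen functor; this follows from applying the enriched pullback-power axiom to the cofibration $\varnothing \to F'$, together with the observation that $\map_{\M^{\D}}(\varnothing,-)$ is the terminal object of $\M$ (because $\varnothing$ is absorbing under the pointwise tensor in any cocomplete closed symmetric monoidal category), so that the would-be pullback-power simplifies to the ordinary map $\map_{\M^{\D}}(F',H) \to \map_{\M^{\D}}(F',H')$. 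Composing these two steps, $\widetilde{\map}_{\D}(F',-)$ sends a (trivial) fibration $H\to H'$ in $\M^{\C\times\D}$ to an objectwise (trivial) fibration in $\M^{\C}$, which is precisely the definition of a (trivial) fibration in the projective model structure.

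The main potential obstacle is securing the enriched pushout-product (equivalently, pullback-power) axiom for the projective model structure on $\M^{\D}$, i.e., confirming that it is genuinely an $\M$-model category in the enriched sense. This is not a formal consequence of the objectwise fibrations/equivalences definition, but it is exactly the content of Theorem \ref{thm:diag-model-cat} as invoked above. With that input accepted, the remainder of the proof amounts to unwinding the end formula for the morphism objects and carrying out the bookkeeping check that the terminal object of $\M$ correctly trivializes the pullback when $F=\varnothing$.
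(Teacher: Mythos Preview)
Your proof is correct and follows essentially the same route as the paper: check the right adjoint preserves (trivial) fibrations by first noting that $\gamma^{\D}_{c}$ preserves them objectwise, then invoking cofibrancy of $F'$ together with the $\M$-model structure on $\M^{\D}$ from Theorem~\ref{thm:diag-model-cat} to conclude that $\map_{\M^{\D}}(F',-)$ does as well. Your version is slightly more explicit in unpacking the pullback-power axiom and the role of $\map_{\M^{\D}}(\varnothing,-)$, but the argument is the same.
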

\begin{proof} Since fibrations and weak equivalences in $\M^{\C}$ are defined objectwise, the $c$-divided power functor $\gamma^{\D}_{c}: \M^{\C\times \D}\to \M^{\D}$ preserves fibrations and weak equivalences for every $c\in \ob\C$. Thus, because $F'$ is cofibrant, and $\M^{\D}$ is an $\M$-model category, the functor $\map_{\M^{\D}}\big(F', \gamma^{\D}_{c}(-)\big)$ also preserves fibrations and weak equivalences.
\end{proof}

\begin{corollary}\label{cor:pairing}  Let $\Phi: \C \times \D \to \E$ be a functor, where $\C$, $\D$, and $\E$ are small categories, and let $\M$ be a monoidal model category.  For any cofibrant object $F'$ in $\M^{\D}$, the adjunction 
$$\longadjunct { \M^{\C}}{\M ^{\E}}{\Phi_!\circ(-\boxtimes F')}{\widetilde{\map}_{\D}\big(F',\Phi^{*}(-) \big)}$$
is a Quillen pair with respect to the projective model category structures.
\end{corollary}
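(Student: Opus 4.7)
The plan is to exhibit the adjunction in question as the composite of two adjunctions each of which has already been shown to be a Quillen pair, and then invoke the elementary fact that the composite of Quillen pairs is a Quillen pair.

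More precisely, I would factor the adjunction as
\[
\longadjunct{\M^{\C}}{\M^{\C \times \D}}{-\boxtimes F'}{\widetilde{\map}_{\D}(F',-)}
\]
followed by
\[
\longadjunct{\M^{\C \times \D}}{\M^{\E}}{\Phi_!}{\Phi^{*}}.
\]
The first of these is a Quillen pair by Proposition \ref{prop:quillen-pair}, which uses precisely the cofibrancy hypothesis on $F'$ in $\M^{\D}$. The second is a Quillen pair by Lemma \ref{lem:module hocolim} applied to the $\M$-functor $\Phi \colon \C \times \D \to \E$ (the enrichment of $\C \times \D$ being the evident tensor product enrichment, which is small since $\C$ and $\D$ are small).

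It then remains only to check that the composite left adjoint is $\Phi_! \circ (-\boxtimes F')$ and that the composite right adjoint is $\widetilde{\map}_{\D}(F',\Phi^{*}(-))$; this is immediate from the definitions. Since the composite of two left Quillen functors is a left Quillen functor (both ``preserves cofibrations'' and ``preserves trivial cofibrations'' are closed under composition), the combined adjunction is a Quillen pair, as asserted.

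I do not anticipate any real obstacle here: the whole content is that the two intermediate adjunctions are already Quillen, and both of those have been established in the preceding results. The only minor point worth being careful about is verifying that $\C \times \D$ carries a legitimate small $\M$-enriched structure so that Lemma \ref{lem:module hocolim} applies to $\Phi$, but this is standard since $\M$ is closed symmetric monoidal.
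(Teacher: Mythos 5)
Your proof is correct and takes essentially the same route as the paper: both factor the adjunction through $\M^{\C\times\D}$, invoke Proposition \ref{prop:quillen-pair} for the $-\boxtimes F'$ step, observe that $(\Phi_!,\Phi^*)$ is Quillen because precomposition preserves objectwise fibrations and weak equivalences (this is exactly the content of Lemma \ref{lem:module hocolim}, which you cite, and which the paper reargues inline), and then compose. Your remark about checking the $\M$-enrichment of $\C\times\D$ is a reasonable precision that the paper leaves implicit.
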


\begin{proof}  It is obvious that 
$$\adjunct{ \M^{\C\times \D}}{\M ^{\E}}{\Phi_!}{\Phi^{*} }$$
is also a Quillen pair with respect to the projective model category structures, since precomposition with $\Phi$ preserves objectwise weak equivalences and fibrations.
\end{proof}

\begin{example} Corollary \ref{cor:pairing} implies that for all cofibrant  $\op Y\in \seq_{\Sigma}(\M)$, the adjunction
$$\longadjunct{\seq_{\Sigma}(\M)}{\seq_{\Sigma}(\M)}{-\square \op Y}{\widetilde{\map}_{\Sigma^{op}}\big(\op Y,\nu^{*}(-) \big)}$$
is a Quillen pair. This adjunction appeared in \cite[1.12]{dwyer-hess:bv}, using simplified notation $\gamma^{\Sigma}_{\bullet}=\gamma^{\Sigma^{op}}_{\bullet}(\nu^{*}(-))$, but without any reference to model structures.
\end{example}

\subsection{Application to operadic modules}\label{sec:opmod} 

We restrict henceforth to the case where $\M=\S$ or $\M=\cat {Top}$, with their Cartesian monoidal structures. Recall that, if $\op P$ is an operad in $\M$ and $X$ and $Y$ are finite sets, then an element of $\map_{\F(\op P)}(X,Y)$ is a pair $\big(f, (p_{j})_{y\in Y}\big)$, where $f: X\to Y$ is a map and $p_{y}\in \op P\big(f^{-1}(y)\big)$.

If $\op Q$ is a second operad, there is an $\M$-functor
$$\mu: \F(\op P) \times \F (\op Q) \to \F (\op P\otimes \op Q),$$
extending the functor $\nu: \Sigma \times \Sigma \to \Sigma$ of Example \ref{ex:matrix-mon}, defined on objects by $\mu(X,X')= X\times X'$ and on hom objects by declaring that the map
$$\map_{\F(\op P)}(X,Y)\times \map_{\F(\op Q)}(X',Y') \to \map_{\F(\op P\otimes \op Q)}(X\times X', Y\times Y')$$
is specified by
$$\big( \big(f, (p_{y})_{y\in Y}\big), \big(g, (q_{y'})_{y'\in Y'}\big)\Big)\mapsto \big( f\times g, (p_{y}\otimes q_{y'})_{(y,y')\in Y\times Y'}\big).$$

\begin{definition}\label{defn:bv-module} Let $\op P$ and $\op Q$ be operads in $\M$, $F$ a $\op P$-module, and $F'$ a $\op Q$-module.  The \emph{(lifted) Boardman-Vogt tensor product} of $F$ and $F'$ is the $\op P \otimes \op Q$-module
$$F\otimes F':= \mu_! (F\boxtimes F').$$ 
\end{definition}

We note the following comparison, although we do not use it.

\begin{proposition}
The Boardman-Vogt tensor product of right modules coincides with that defined in \cite{dwyer-hess:bv}.
\end{proposition}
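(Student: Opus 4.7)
The plan is to unpack the Dwyer--Hess bimodule tensor product of \cite{dwyer-hess:bv} in the special case where the left-acting operads are trivial---so that bimodules reduce to right modules---and to identify the result with the left Kan extension $\mu_!(F \boxtimes F')$ used in Definition \ref{defn:bv-module}.

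First I would recall that the bimodule tensor product of \cite{dwyer-hess:bv} is constructed as a quotient of the free bimodule on $F \boxtimes F'$ by identifications coming from the right $\op P$- and $\op Q$-actions together with the interchange relations $p \otimes q = q \otimes p \cdot \tau_{k,l}$ in $\op P \otimes \op Q$ from Remark \ref{rmk:switch}. Specializing to the case where the left operads are trivial, all left-action relations become vacuous, and what remains is a coequalizer encoding only the right $\op P$- and $\op Q$-actions together with the interchange relations that arise when an element of $\op P \otimes \op Q$ is written as a product of pieces from $\op P$ and from $\op Q$.

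Second, I would expand $\mu_!(F \boxtimes F')$ via the standard coend formula for enriched Kan extensions,
\[
\mu_!(F \boxtimes F')(Z) \;\cong\; \int^{(X,X')\in \F(\op P) \times \F(\op Q)} \map_{\F(\op P \otimes \op Q)}\!\bigl(X \times X',\, Z\bigr) \times F(X) \times F'(X'),
\]
and use the explicit description of the morphisms in $\F(\op P \otimes \op Q)$ in terms of set maps decorated with elements of the form $p \otimes q$ worked out in Section \ref{sec:opmod}. Unraveling the coend relations produces exactly the same generators---pairs from $F(X) \times F'(X')$ acted on by a morphism in $\F(\op P \otimes \op Q)$---modulo exactly the same relations as appear in the specialization of the Dwyer--Hess formula from the first step. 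Naturality in $F$ and $F'$ and compatibility with the $(\op P \otimes \op Q)$-module structure on both sides are automatic from the universal properties of the Kan extension and the defining colimit.

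The main obstacle I expect is bookkeeping: checking that the interchange relations in $\op P \otimes \op Q$ are captured precisely by the composability identifications in the coend, and that under the identification $\mu(X,X') = X \times X'$ no extra relations appear on either side. Once the generator-and-relation descriptions are properly aligned using the normal form for elements of $\op P \otimes \op Q$ afforded by the interchange law, the comparison map in either direction is visibly a bijection on generators that respects all relations, and the proof is essentially formal.
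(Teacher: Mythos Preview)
Your approach differs from the paper's in a way worth noting. The paper does not attempt a direct element-level comparison of the two constructions. Instead it observes that both tensor products preserve colimits in each variable and that every right module is a coequalizer of free modules, so it suffices to check the identity on free modules: $(\zeta_{\op P})_!(\op X)\otimes(\zeta_{\op Q})_!(\op Y)\cong(\zeta_{\op P\otimes\op Q})_!(\op X\square\op Y)$. This is then verified formally from the identity $\zeta_{\op P\otimes\op Q}\circ\nu=\mu\circ(\zeta_{\op P}\times\zeta_{\op Q})$ and a chain of enriched adjunctions, invoking \cite[Theorem~1.14]{dwyer-hess:bv} as the characterizing property of the bimodule tensor product. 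The virtue of this route is that one never has to look inside $\op P\otimes\op Q$ at all.

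Your direct coend comparison is reasonable in outline, but one step deserves scrutiny: you invoke ``the normal form for elements of $\op P\otimes\op Q$ afforded by the interchange law.'' In general no such normal form exists---the Boardman--Vogt tensor product is a quotient of a coproduct of operads, and the interchange relation does not yield a canonical representative for each class. What you actually need is not a normal form but a precise matching of the colimit diagrams defining each side, which can be done but is more delicate than your sketch suggests. The paper's reduction to free modules sidesteps exactly this difficulty by replacing an element-level bijection with a universal-property argument.
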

\begin{proof}
It suffices to check that the two definitions agree on free modules, since every module is a coequalizer of free modules, and both constructions preserve colimits in each variable. We therefore need to show that for any symmetric sequences $\op X$ and $\op Y$ and any operads $\op P$ and $\op Q$,
$$(\zeta_{\op P})_!(\op X) \otimes (\zeta_{\op Q})_!(\op Y)\cong (\zeta_{\op P\otimes \op Q})_!(\op X\square \op Y).$$
(See Remark \ref{remark:free-mod} and Example \ref{ex:matrix-mon} for reminders of the notation used here, and see Theorem 1.14 in \cite{dwyer-hess:bv} for why this is the criterion to be checked.)

The desired isomorphism is a straightforward consequence of the facts that 
$$\zeta_{\op P\otimes \op Q}\circ \nu = \mu \circ (\zeta_{\op P}\times \zeta_{\op Q}),$$
which follows immediately from the definitions, and of the natural isomorphism in $\M$
$$\Hom_{\M^{\F(\op P)\times \F (\op Q)}}\left((\zeta_{\op P})_!(\op X) \boxtimes (\zeta_{\op Q})_!(\op Y), F'\right) \cong \Hom_{\M^{\Sigma^{op}\times \Sigma^{op}}}\big(\op X \boxtimes \op Y, (\zeta_{\op P}\times \zeta_{\op Q})^{*}F'\big),$$
which follows easily from a string of enriched adjunctions. 
\end{proof}

The next result is an immediate consequence of Corollary \ref{cor:pairing}.

\begin {proposition}\label{prop:tensor with cofibrant} Let $\op P$ and $\op Q$ be operads in $\M$, which is  $\S$ or $\cat {Top}$, and let $F'$ be a right $\op Q$-module.  If $F'$ is cofibrant in the projective model structure, then 
$$\longadjunct{\Mod{\op P}(\M)}{\Mod{\op P\otimes \op Q}(\M)}{-\otimes F'}{\widetilde{\map}_{\F(\op Q)}\big(F',\mu^{*}(-) \big)}$$
is a Quillen pair.
\end{proposition}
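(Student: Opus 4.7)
The plan is to recognize the asserted adjunction as a direct instance of Corollary \ref{cor:pairing}, applied to the $\M$-functor
$$\mu\colon \F(\op P)\times \F(\op Q)\longrightarrow \F(\op P\otimes \op Q)$$
constructed immediately before Definition \ref{defn:bv-module}. Taking $\C=\F(\op P)$, $\D=\F(\op Q)$, $\E=\F(\op P\otimes\op Q)$, and $\Phi=\mu$, Corollary \ref{cor:pairing} produces, for any cofibrant $F'\in\M^{\F(\op Q)}$, a Quillen pair
$$\longadjunct{\M^{\F(\op P)}}{\M^{\F(\op P\otimes\op Q)}}{\mu_!\circ(-\boxtimes F')}{\widetilde{\map}_{\F(\op Q)}\bigl(F',\mu^*(-)\bigr)}$$
with respect to the projective model structures of Theorem \ref{thm:diag-model-cat} on each side.

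The remaining task is purely one of identification. By Lemma \ref{lem:right-mod}, the categories $\Mod{\op P}(\M)$, $\Mod{\op Q}(\M)$, and $\Mod{\op P\otimes\op Q}(\M)$ are $\M$-equivalent to the enriched presheaf categories $\M^{\F(\op P)}$, $\M^{\F(\op Q)}$, and $\M^{\F(\op P\otimes\op Q)}$, respectively. Under these equivalences, the projective model structure on right modules is precisely the one supplied by Theorem \ref{thm:diag-model-cat}, so cofibrancy of $F'$ in the sense of the statement agrees with cofibrancy of $F'$ as an object of $\M^{\F(\op Q)}$. Definition \ref{defn:bv-module} then records exactly the identification $F\otimes F'=\mu_!(F\boxtimes F')$, so the left adjoints in the two Quillen pairs coincide, and the right adjoints match by inspection.

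There is no real obstacle: all of the substantive model-categorical content has already been assembled in Proposition \ref{prop:quillen-pair} and Corollary \ref{cor:pairing}, and the construction of $\mu$ has been carried out explicitly above. The only point that merits a moment's care is verifying that the projective model structure on right modules, defined through the presheaf identification of Lemma \ref{lem:right-mod}, is indeed the same structure to which Corollary \ref{cor:pairing} applies via the functor $\mu$; but this is transparent, because both invoke precisely the objectwise fibrations and weak equivalences in $\M$.
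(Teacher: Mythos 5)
Your proposal is correct and takes exactly the paper's approach: the paper states that the result is an immediate consequence of Corollary \ref{cor:pairing}, applied to the functor $\mu$ under the presheaf identification of Lemma \ref{lem:right-mod}, which is precisely what you have spelled out.
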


It follows that the functor $\Mod{\op P}(\M)\times\Mod{\op Q}(\M)\to \Mod{\op P\otimes\op Q}(\M)$ preserves weak equivalences between cofibrant objects, so it admits a total left derived functor \[-\otimes^\mathbb{L}-:\Ho(\Mod{\op P}(\M))\times\Ho(\Mod{\op Q}(\M))\to \Ho(\Mod{\op P\otimes\op Q}(\M)),\] the \emph{derived (lifted) Boardman-Vogt tensor product}. Explicitly, the derived tensor product may be computed by choosing cofibrant representative modules, forming the Boardman-Vogt tensor product, and passing to the homotopy category.

\section{Structured configuration spaces and skew cubes}

We turn now to the geometric side of our study, introducing and studying the operads and modules that organize the homotopy types of the configuration spaces of interest. 

\subsection{Structured manifolds} We will be interested in manifolds equipped with tangential structures. We adopt the categorical approach of \cite[V.5-10]{andrade}, whose point-set model for the space of structured embeddings provides a topologically enriched category with a strict composition (see \cite{ayala-francis} for an $\infty$-categorical approach to the same ideas). 

\begin{definition}
Let $M$ be an $m$-manifold and $G\to GL(m)$ a continuous homomorphism, and write $\Fr_{M}$ for the frame bundle of the tangent bundle of $M$. A \emph{$G$-framing} on $M$ is a principal $G$-bundle $\Fr^G_M$ together with an isomorphism \[\varphi_M:\Fr^G_M\times_G GL(m)\xrightarrow{\cong} \Fr_M\] of principal $GL(m)$-bundles covering the identity. A \emph{framing} is a $G$-framing with $G$ trivial.
\end{definition}

Abusively, our notation will never reflect the fact that the notion of a $G$-framing depends on the homomorphism $G\to GL(m)$, and we typically abbreviate to $M$ the triple constituting a $G$-framed manifold.

\begin{example}\label{example:euclidean space}
The manifold $\mathbb{R}^m$ is canonically $G$-framed for any $G\to GL(m)$.
\end{example}

When we write $\mathbb{R}^m$ in what follows, we implicitly refer to this standard $G$-structure.

\begin{example}\label{example:open and disjoint union}
Open submanifolds and disjoint unions of $G$-framed manifolds are canonically $G$-framed.
\end{example}

\begin{example}\label{example:cartesian product}
The Cartesian product of a $G$-framed manifold and an $H$-framed manifold is canonically $G\times H$-framed.
\end{example}

Recall that an embedding $f$ of manifolds induces a bundle map $Df$ on the corresponding frame bundles.

\begin{definition}
Let $M_1$ and $M_2$ be $G$-framed manifolds. A $G$-\emph{framed embedding} consists of an embedding $f:M_1\to M_2$, a bundle map $\tilde f:\Fr_{M_1}^G\to \Fr_{M_2}^G$, and a $GL(m)$-equivariant homotopy $h:\Fr_{M_1}\times[0,1]\to \Fr_{M_2}$ from $Df$ to  the composite $\varphi_{M_2}\circ (\tilde f\times_GGL(m))\circ\varphi_{M_1}^{-1}$. We require that $\tilde f$ and $h$ each cover $f$.
\end{definition}

The set $\Emb^G(M_1, M_2)$ of $G$-framed embeddings is naturally a subspace of the standard model for the homotopy pullback $P$ depicted in the following commuting diagram: \[\xymatrix{\Emb^G(M_1, M_2)\ar[dr]^-\sim\ar@/^1.4pc/[drr]\ar@/_1.4pc/[ddr]\\
&P\ar[r]\ar[d]& \Map^G(\Fr_{M_1}^G, \Fr_{M_2}^G)\ar[d]\\
&\Emb(M_1, M_2)\ar[r]^-D&\Map^{GL(m)}(\Fr_{M_1}, \Fr_{M_2}).
}\] As shown in \cite[V.9.1-2]{andrade}, the inclusion is a weak equivalence as indicated. The advantage of working with this model for the homotopy pullback is that one may compose $G$-framed embeddings using composition of embeddings and bundle maps and \emph{pointwise} composition of homotopies. We obtain in this way a topological category $\Mfld^G_m$, which we regard as symmetric monoidal under disjoint union. 

Note that, by combining Examples \ref{example:open and disjoint union} and \ref{example:cartesian product}, the configuration sapce $\Conf_k(M)\subseteq M^k$ is canonically $G^k$-framed whenever $M$ is $G$-framed, so it is sensible to make the following definition.

\begin{definition}
The $G$-\emph{framed configuration space} of $k$ points in $M$ is the $G^k\rtimes\Sigma_k$-space \[\Conf^G_k(M):=\Fr_{\Conf_k(M)}^{G^k},\] where $\Sigma_k$ acts on $G^k$ by permuting the factors.
\end{definition}

\begin{proposition}[{\cite[14.4]{andrade}}]\label{prop:embedding and conf}
Let $M$ be a $G$-framed manifold. For each $k\geq0$, the map \[\Emb^G(\amalg_k\mathbb{R}^m, M)\longrightarrow\Conf^G_k(M)\] induced by evaluation at the origin is a $G^k\rtimes \Sigma_k$-equivariant weak equivalence.
\end{proposition}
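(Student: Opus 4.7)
My plan is to reduce the $G$-framed statement to its unframed analogue and then exploit a classical fact about embedding spaces.

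I would take as known the following unframed version: the derivative-at-origin map
\[D_0\colon \Emb(\amalg_k\mathbb{R}^m, M) \longrightarrow \Fr_{\Conf_k(M)}\]
is a $GL(m)^k\rtimes\Sigma_k$-equivariant weak equivalence, proved by an explicit deformation retraction that radially scales any embedding toward its derivative at each origin. Given this, I would fit the proposition into the commutative square
\[\xymatrix{
\Emb^G(\amalg_k\mathbb{R}^m, M) \ar[r]^-{\mathrm{ev}_0} \ar[d] & \Conf_k^G(M) \ar[d] \\
\Emb(\amalg_k\mathbb{R}^m, M) \ar[r]^-{D_0}_-\sim & \Fr_{\Conf_k(M)},
}\]
whose vertical arrows forget the $G$-structure, and argue that the square is a homotopy pullback. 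Combined with the classical weak equivalence on the bottom, this immediately yields the weak equivalence claimed at the top, and the $G^k\rtimes\Sigma_k$-equivariance is built into every step of the construction.

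To verify the homotopy-pullback property, I would compare the homotopy fibers of the two vertical maps. On the left, the homotopy-pullback description of $\Emb^G$ identifies the fiber over an embedding $e$ with the homotopy fiber of
\[\Map^G(\Fr^G_{\amalg_k\mathbb{R}^m}, \Fr^G_M) \longrightarrow \Map^{GL(m)}(\Fr_{\amalg_k\mathbb{R}^m}, \Fr_M)\]
over $De$. On the right, the fiber over $D_0(e)$ is the space of $G^k$-frames lifting $D_0(e)$ through the principal-bundle map $\Fr_{\Conf_k(M)}^{G^k} \to \Fr_{\Conf_k(M)}$. Both fibers are controlled by choices of $G$-lifts at the $k$ origins, and matching them up amounts to running the scaling argument $G$-equivariantly: a $G$-bundle map $\Fr^G_{\amalg_k\mathbb{R}^m}\to\Fr^G_M$ is determined, up to a contractible space of deformations, by its values at the standard $G$-frames over the origins, and the analogous statement for $GL(m)$-bundle maps is exactly the classical fact used in the unframed case.

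The main obstacle I anticipate is the bookkeeping needed to reconcile the homotopy-pullback presentation of $\Emb^G$, which carries the explicit $GL(m)$-equivariant path data $h$ between $Df$ and $\varphi_M\circ(\tilde f\times_G GL(m))\circ\varphi^{-1}$, with the strict description of $\Conf_k^G(M) = \Fr_{\Conf_k(M)}^{G^k}$, which carries no such path data. Once the scaling argument is run cleanly in the $G$-equivariant setting and the extra homotopy data is shown to live in a contractible space compatible with the scaling, the proposition follows, and equivariance under $G^k \rtimes \Sigma_k$ is transparent: $G^k$ acts on the $G$-framings at the $k$ origins and $\Sigma_k$ permutes the $k$ disjoint copies of $\mathbb{R}^m$.
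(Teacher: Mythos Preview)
The paper does not give a proof of this proposition; it is stated with a citation to Andrade's thesis and used as a black box. There is therefore nothing in the paper to compare your argument against.

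On its own merits your strategy is sound and is in fact close to how Andrade proceeds. The homotopy-pullback description of $\Emb^G$ recalled in the paper immediately identifies the homotopy fiber of the left vertical map in your square with the homotopy fiber of
\[\Map^G(\Fr^G_{\amalg_k\mathbb{R}^m},\Fr^G_M)\longrightarrow \Map^{GL(m)}(\Fr_{\amalg_k\mathbb{R}^m},\Fr_M),\]
and contractibility of each component of $\amalg_k\mathbb{R}^m$ lets you replace these mapping spaces, up to equivalence, by the fibers of $\Fr^G_M\to\Fr_M$ over the $k$ image points. This matches the homotopy fiber of the right vertical map, as you say.

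The issue you flag is real but not an obstacle: your square does not commute on the nose, since the top map records $\tilde f$ at the origins while the bottom records $Df$ at the origins, and these differ exactly by $h$ evaluated at the origins. That very $h$ supplies a canonical homotopy making the square homotopy-commutative, and this is all you need for the fiberwise comparison. Once you write this homotopy down explicitly the rest of your outline goes through; the scaling deformation in the unframed case need not be rerun $G$-equivariantly, because you have already isolated the $G$-data in the fiber direction.
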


The obvious analogues of Examples \ref{example:open and disjoint union} and \ref{example:cartesian product} apply to $G$-framed embeddings, so the $G$-framed configuration space extends canonically to a functor \[\Conf_k^G:\Mfld_m^G\to \cat{Top}.\] In fact, this functor is also an enriched functor in a natural way, but we will not use this fact.

\begin{remark}
Setting $M=\mathbb{R}^m$ and $k=1$ in Proposition \ref{prop:embedding and conf}, it follows that the topological monoid of $G$-framed self-embeddings of $\mathbb{R}^m$---considered, as always, with its standard $G$-structure---is weakly equivalent to $G$. In particular, the full topological subgroupoid of $\Mfld^G_m$ containing the single object $\mathbb{R}^m$ is equivalent to $BG$.
\end{remark}

\subsection{Skew little cubes}\label{section:additivity} 

We denote by $\Lambda(m)\subseteq GL(m)$ the subgroup of diagonal matrices with positive entries. At times, we may refer to $\Lambda(m)$ as the \emph{dilation group}. Recall that the QR decomposition in linear algebra provides a canonical identification of the right coset space $O(m)\backslash GL(m)$ with the space of upper triangular matrices with positive diagonal entries. 

\begin{definition}\label{def:dilation rep}
A \emph{dilation representation} is a continuous group homomorphism $\rho:G\to GL(m)$ such that $\mathrm{im}(\rho)=(\mathrm{im}(\rho)\cap O(m))\cdot \Lambda(m).$
\end{definition}

Thus, the image of $\rho$ contains the entire dilation group $\Lambda(m)$, and the upper triangular matrix appearing in the QR factorization of any element in this image lies in $\Lambda(m)$.



{\begin{example}
An example of a dilation representation that is not an inclusion is provided by the universal cover of $SO(m)\cdot \Lambda(m)$, which is homotopy equivalent to the spin group.
\end{example}}

For the remainder of this section, we fix a dilation representation $\rho:G\to GL(m)$, which we abusively leave implicit in the notation. {We write $\Box^m:=(-1,1)^m\subseteq \mathbb{R}^m$ for the open $m$-cube of side-length 2 centered at the origin.}

\begin{definition}
A $G$-\emph{skew little cube} is a pair $(v,g)$ with $v\in\Box^m$ and $g\in G$ such that the formula $f_{v,g}(x)=\rho(g)x+v$ specifies an embedding $f_{v,g}:\Box^m\to \Box^m$. A \emph{little $m$-cube} is a $\Lambda(m)$-skew little cube.
\end{definition}

We identify a skew little cube $(v,g)$ with the associated embedding $f_{v,g}$. We write $\op {C}^G_m(k)$
for the set of $k$-tuples of $G$-skew little cubes with pairwise disjoint images (cf. Figure 1)\begin{figure}[h]\label{figure}
 \centerline{\includegraphics[scale=0.3]{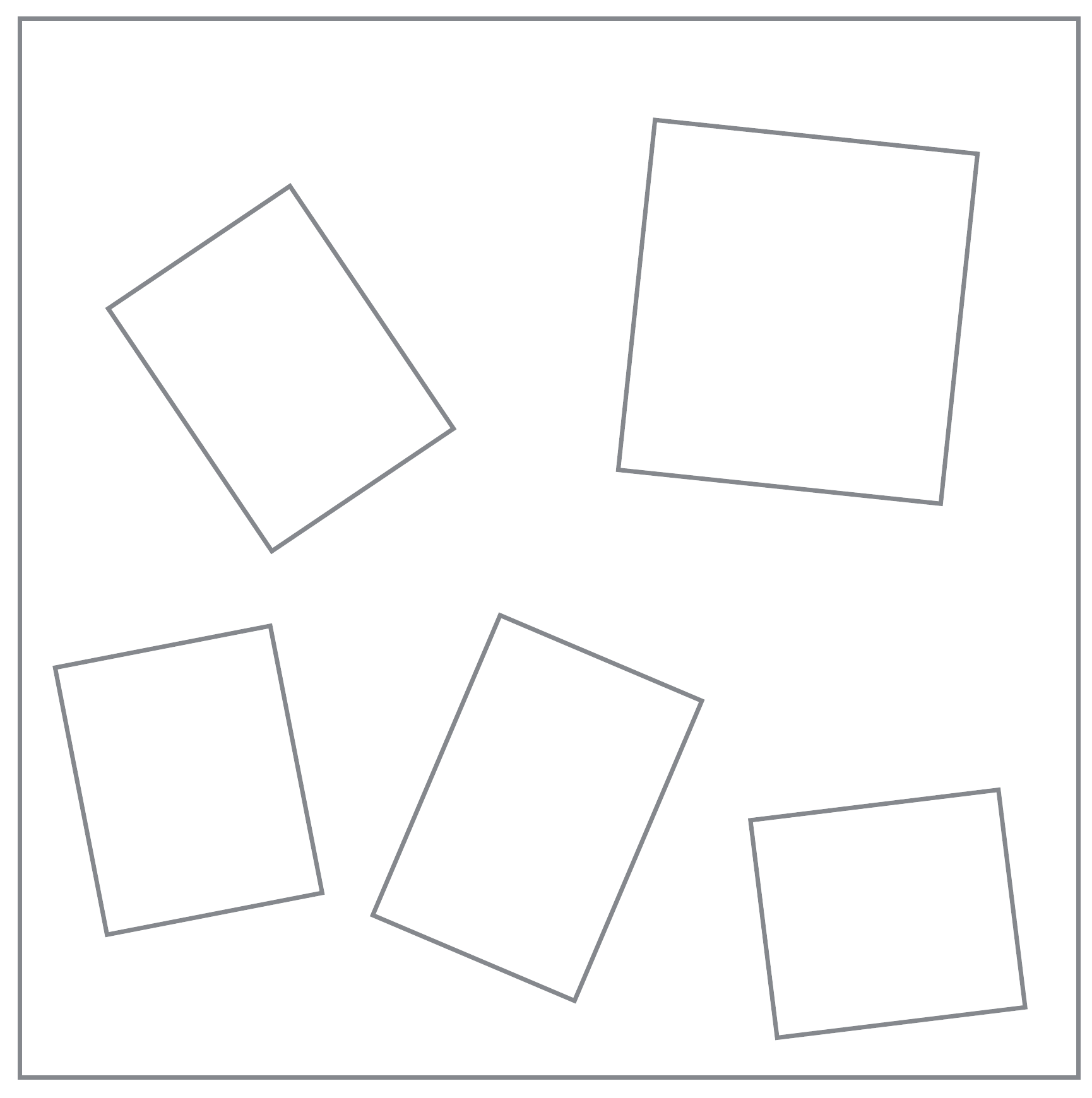}}
 \caption{An element of $\op C^{O(2)\cdot \Lambda (2)}_{2}(5)$}
\end{figure} and topologize this set as a subspace of ${(-1,1)}^{2mk}\times G^k$ by recording the images of ${\pm1/2}$ in each coordinate, or, alternatively, as a subspace of $\Map(\amalg_k\Box^m,\Box^m)\times G^k$---as in \cite[4.2]{may2}, these topologies coincide. 
It is easy to see that $G$-skew little cubes are closed under composition, so $\op {C}^G_m$ forms an operad. {Observe that $\op {C}^{\Lambda(m)}_m$ is isomorphic to the usual little $m$-cubes operad $\op C_m$.}

\begin{remark}
The definition of $\op C_m^G$ is sensible for any homomorphism $\rho:G\to GL(m)$; however, unless $\rho$ is a dilation representation, one cannot expect this construction to be well-behaved---for example, $\op C_m^{\{e\}}(k)=\varnothing$ for $k>1$.
\end{remark}

\begin{remark}
The reader is cautioned that the action of a dilation representation on the plane need not be conformal; in particular, it need not preserve right angles. On the other hand, the action \emph{does} preserve the orthogonality of the standard basis, as Figure 1 suggests, since multiplication on the right by the product of an orthogonal matrix with a diagonal matrix sends the standard basis to an orthogonal basis.
\end{remark}

We now connect these ideas to those of the previous section. Note first that, as an open submanifold of $\mathbb{R}^m$, the manifold $\Box^m$ has a canonical $G$-structure with $\Fr^G_{\Box^m}=\Box^m\times G$ (this observation does not require that $G\to GL(m)$ be a dilation representation). Moreover, a $G$-skew little cube determines a canonical $G$-framed embedding as follows. We have a map $\op C_m^G(k)\to \Map^G(\amalg_k\Fr_{\Box^m}^G,\Fr_{\Box^m}^G)$ defined by sending the $G$-skew little cube $\{(v_i,g_i)\}_{i=1}^k$ to the bundle map $(x,h)\mapsto (f_{v_i,g_i}(x), g_ih)$, where $x$ lies in the $i$th component of $\amalg_k\Box^m$. Since $Df_{v_i,g_i}=\rho(g_i)$, this map, in combination with the projection $\op C_m^G(k)\to \Emb(\amalg_k\Box^m,\Box^m)$, defines the dashed filler in the commuting diagram \[\xymatrix{
\op C_m^G(k)\ar@/^1.4pc/[drr]\ar@/_1.4pc/[ddr]\ar@{-->}[dr]\\
&\Emb^G(\amalg_k\Box^m, \Box^m)\ar[r]\ar[d]& \Map^G(\Fr_{\amalg_k\Box^m}^G, \Fr_{\Box^m}^G)\ar[d]\\
&\Emb(\amalg_k\Box^m, \Box^m)\ar[r]^-D&\Map^{GL(m)}(\Fr_{\amalg_k\Box^m}, \Fr_{\Box^m}),
}\] by composing the induced map to the pullback with the natural map to the homotopy pullback. This map evidently respects composition, so, after fixing a $G$-framed diffeomorphism $\Box^m\cong \mathbb{R}^m$ sending {the origin} to the origin, we obtain a map of operads \[\varphi=\varphi_m^G:\op C_m^G\to \End_{\Mfld_m^G}(\mathbb{R}^m).\]

\begin{theorem}\label{thm:cubes and euclidean}
Let $\rho:G\to GL(m)$ be a dilation representation of a locally compact Hausdorff group. The map $\varphi:\op C_m^G\to \End_{\Mfld_m^G}(\mathbb{R}^m)$ is a weak equivalence.
\end{theorem}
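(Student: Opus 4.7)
The plan is to establish the weak equivalence arity by arity via an explicit deformation retract exploiting the dilation property of $\rho$.

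Fix $k\geq 0$. Proposition \ref{prop:embedding and conf} identifies $\Emb^G(\amalg_k\mathbb{R}^m,\mathbb{R}^m)$ weakly with $\Conf^G_k(\mathbb{R}^m)$, which in turn is canonically isomorphic to $\Conf_k(\mathbb{R}^m)\times G^k$ via the trivialization of $\Fr^{G^k}_{\Conf_k(\mathbb{R}^m)}$ induced by the product $G$-structure on $(\mathbb{R}^m)^k$ (Example \ref{example:cartesian product}). Tracing the construction of $\varphi$ shows that the resulting composite
$$\op C_m^G(k) \longrightarrow \Conf_k(\Box^m)\times G^k$$
is the evident open inclusion $\{(v_i,g_i)\}\mapsto ((v_i),(g_i))$ (after identifying $\Box^m\cong\mathbb{R}^m$). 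By 2-out-of-3, it suffices to show that this inclusion $\iota$ is a weak equivalence.

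To construct a homotopy inverse, I first produce a continuous section $\sigma:\Lambda(m)\to G$ of $\rho$ with $\sigma(I)=e$. The dilation hypothesis $\Lambda(m)\subset\mathrm{im}(\rho)$ makes $\rho^{-1}(\Lambda(m))\to\Lambda(m)$ surjective; local compactness of $G$ ensures that $\rho$ is a quotient map onto its image, so this restriction is a principal $\ker(\rho)$-bundle; contractibility of $\Lambda(m)\cong\mathbb{R}^m$ then supplies a global section, which may be normalized so that $\sigma(I)=e$. Next, let $\delta_{\max}(x)$ be the supremum of scalars $s>0$ for which the family $\{s\rho(g_i)\Box^m + v_i\}$ is pairwise disjoint and contained in $\Box^m$; this is continuous and positive on $\Conf_k(\Box^m)\times G^k$, expressible via the minimum pairwise distance and the distance to the boundary of $\Box^m$. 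Define $\delta(x):=\min\bigl(1,\tfrac12\delta_{\max}(x)\bigr)$ and
$$r(x) := \bigl((v_i),(g_i\sigma(\delta(x) I))\bigr).$$
Using the identity $\rho(g_i\sigma(\delta I))=\delta\rho(g_i)$ from the section property, one verifies that $r(x)\in\op C_m^G(k)$. The straight-line homotopy
$$H_t(x) = \bigl((v_i),(g_i\sigma((1-t+t\delta(x))I))\bigr)$$
exhibits $\mathrm{id}\simeq \iota\circ r$ on $\Conf_k(\Box^m)\times G^k$, and restricts to a homotopy $\op C_m^G(k)\times[0,1]\to\op C_m^G(k)$ (since monotonically shrinking a disjoint family of skew cubes that fits inside $\Box^m$ yields another such family), exhibiting $\mathrm{id}\simeq r\circ\iota$. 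Hence $\iota$ is a homotopy equivalence, and a fortiori a weak equivalence.

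The principal technical obstacle is the construction of $\sigma$, where both hypotheses of the theorem enter essentially: the dilation property is what makes $\sigma$ definable set-theoretically, while local compactness of $G$ is what equips $\rho$ with the principal-bundle structure over its image needed for contractibility of $\Lambda(m)$ to yield a global continuous section. The remaining ingredients---the continuity of $\delta$ and the invariance of $\op C_m^G(k)$ under the shrinking homotopy---follow from the strict-inequality form of the defining conditions and from the fact that scaling down preserves both disjointness and containment in $\Box^m$.
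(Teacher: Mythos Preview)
Your argument is correct and takes a genuinely different route from the paper. Both reduce to showing that the centers-and-labels map $\pi\colon\op C_m^G(k)\to\Conf_k(\Box^m)\times G^k$ is a homotopy equivalence, but the paper proceeds in two stages: for $G\subseteq GL(m)$ a subgroup it uses the QR decomposition $G\cong(G\cap O(m))\times\Lambda(m)$ to deform onto the subspace of \emph{freewheeling} skew cubes (equidiameter cubes able to rotate freely in place without colliding), which is visibly homeomorphic to $\widetilde{\op C}_m(k)\times(G\cap O(m))^k$; it then reduces the general case to the subgroup case via a homotopy pullback square over $\rho(G)$. You instead construct a single global homotopy inverse: the section $\sigma\colon\Lambda(m)\to G$ lets you uniformly shrink any labelled configuration into an honest element of $\op C_m^G(k)$, and convexity of $\Box^m$ (together with $s\Box^m\subseteq\Box^m$ for $s\le 1$) keeps the shrinking homotopy inside $\op C_m^G(k)$ when started there. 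Your route is more economical and avoids the freewheeling geometry entirely; the paper's route has the compensating virtue of exhibiting an explicit product decomposition of the homotopy type. One caveat worth flagging: the assertion that local compactness alone forces $\rho$ to be a quotient map onto its image is not literally correct (take $G=\mathbb{R}$ with the discrete topology mapping to $\Lambda(1)$), so the existence of a continuous section $\sigma$ is not fully justified as stated; however, the paper's appeal to $G\to\rho(G)$ being a fibration makes the same implicit identification of $G/\ker\rho$ with $\rho(G)$, so both arguments require a mild extra hypothesis (e.g.\ second countability of $G$, or that $\rho$ is open onto its image) to be airtight.
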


The proof of this result will occupy Section \ref{section:cubes and euclidean proof} below.

\begin{remark}
Since $\Lambda(m)$ is contractible, a dilation framing of $M$ is no more data, up to homotopy, than a trivialization of $TM$. Thus, requiring the structure group $G\to GL(m)$ to be a dilation representation places no serious constraints on the geometry under consideration. The extra flexibility afforded by the dilation group is what permits the definition of the map $\varphi$.
\end{remark}

\begin{remark}
Either using the equivalence of Theorem \ref{thm:cubes and euclidean} as an intermediary, or by direct comparison, one can show that the operad $\op C_m^{SO(m)\cdot\Lambda(m)}$ is weakly equivalent to the so-called ``framed'' little disks operad---see \cite{salvatore-wahl}. More generally, if $G\subseteq O(m)$ is a subgroup, the operad $\op C_m^{G\cdot\Lambda(m)}$ is equivalent to the semidirect product of the little $m$-disks operad with $G$.
\end{remark}

\begin{remark}
The little cubes operads $\op C_m$ are classical \cite{boardman-vogt:lnm, may2}. Versions accommodating the action by a representation of $G$ have been studied by various authors (see \cite{salvatore-wahl}, for example). Typically, one incorporates the action by replacing the cube $\Box^m$ with the closed unit disk $D^m$; however, products of disks are not disks, so these models are not well suited to contexts like ours in which additivity is a key issue---see Section \ref{section:additivity} below. To our knowledge, $\op C_m^G$ is the first little cubes-type model for the homotopy type of the operad $\End_{\Mfld_m^G}(\mathbb{R}^m)$
\end{remark}

\subsection{Additivity}\label{section:additivity}

Fix dilation representations $G\to GL(m)$ and $H\to GL(n)$, and note the canonical identification $\Box^m\times\Box^n=\Box^{m+n}$. If $\{(v_i,g_i)\}_{i=1}^k\in\op C_m^G(k)$ is a $G$-skew little cube, then $\{(v_i,0,\ldots, 0), g_i\}_{i=1}^k$ is a $G\times H$-skew little cube, where $g_i$ is regarded as an element of $G\times H$ via the neutral element of $H$. This assignment is obviously continuous, $\Sigma_k$-equivariant, and respects composition, so we obtain a map of operads $\op C^G_m\to \op C_{m+n}^{G\times H}$. Similarly, we have a map of operads $\op C^H_n\to \op C^{G\times H}_{m+n}$, and these two maps satisfy the interchange relations defining a map $\iota$ from the Boardman-Vogt tensor product.

\begin{conjecture}\label{conj:additivity}
Let $G\to GL(m)$ and $H\to GL(n)$ be dilation representations. The map \[\iota:\op C^G_m\otimes\op C_n^H\to \op C_{m+n}^{G\times H}\] is a weak equivalence.
\end{conjecture}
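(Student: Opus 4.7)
The plan is to generalize the classical Dunn-Brinkmeier additivity theorem, $\op C_m \otimes \op C_n \xrightarrow{\sim} \op C_{m+n}$, to the skew setting. The crucial structural input is that the product representation $G \times H \to GL(m+n)$ is block-diagonal: writing $v = (v_1, v_2)$ with $v_1 \in \Box^m$ and $v_2 \in \Box^n$, the image of a single $(G \times H)$-skew $(m+n)$-cube $f_{v,(g,h)}$ is precisely the product of the images of the $G$-skew $m$-cube $f_{v_1,g}$ and the $H$-skew $n$-cube $f_{v_2,h}$, under the canonical identification $\Box^{m+n} = \Box^m \times \Box^n$. Consequently, the image of $\iota$ consists of \emph{grid configurations}: families of $jl$ pairwise disjoint product rectangles $A_a \times B_b$ indexed by $(a,b) \in \{1,\dots,j\} \times \{1,\dots,l\}$, where $\{A_a\}$ and $\{B_b\}$ arise from configurations in $\op C_m^G(j)$ and $\op C_n^H(l)$ respectively, assembled via the interchange relation of Definition~\ref{definition:bv-op}.

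The heart of the proof is then a deformation-retraction argument in each arity $k$, adapting Dunn's: one constructs an explicit $(G^k \times H^k) \rtimes \Sigma_k$-equivariant homotopy that scales and translates the rectangles in $\op C_{m+n}^{G \times H}(k)$ so as to align them into grid form, compatibly across arities so as to induce a retraction of operads. Because the $G \times H$ action is linear and block-diagonal, all the required translations and scalings commute with the framing data, so the homotopy lifts straightforwardly from the underlying unstructured configuration spaces once constructed there. Specializing to $G = \Lambda(m)$ and $H = \Lambda(n)$ recovers the classical Dunn-Brinkmeier theorem; the general case should be derivable from it by a fiberwise argument over the underlying classical little-cubes configurations. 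What remains is then a combinatorial identification of the tree-model presentation of $(\op C_m^G \otimes \op C_n^H)(k)$ with the subspace of grid configurations, which should follow from the general formalism of the Boardman-Vogt tensor product together with the explicit parametrization of product rectangles by pairs of skew cubes.

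The main obstacle is the construction of the equivariant deformation retraction onto grid configurations. In the classical setting, Dunn's argument already requires subtle simultaneous control of scaling and translation parameters to preserve both pairwise disjointness and containment in the ambient cube throughout the homotopy; the skew setting introduces genuinely new complications because rotations induced by non-trivial elements of $\mathrm{im}(\rho_G) \cap O(m)$ (and similarly for $H$) can make the projections of a configuration onto the $m$- and $n$-factors overlap in ways impossible in the axis-aligned case. One possible route around this difficulty is to exploit Theorem~\ref{thm:cubes and euclidean}: the conjecture would follow if one could show that the induced map $\End_{\Mfld_m^G}(\mathbb{R}^m) \otimes \End_{\Mfld_n^H}(\mathbb{R}^n) \to \End_{\Mfld_{m+n}^{G \times H}}(\mathbb{R}^{m+n})$ is a weak equivalence, and this endomorphism-operad version may be amenable to a more categorical argument using the monoidal structure on the source categories under Cartesian product of structured manifolds.
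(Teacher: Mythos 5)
This statement is Conjecture~\ref{conj:additivity}; the paper does not prove it, and explicitly defers it to future work, noting only that the classical case $G=\Lambda(m)$, $H=\Lambda(n)$ is due to Dunn and Brinkmeier. There is therefore no proof in the paper to compare your argument against, and what you have written is, as you yourself acknowledge, a program rather than a proof.

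That said, the program is reasonable, and you correctly locate the two places where the real work lies. First, the assertion that the deformation retraction onto grid configurations ``lifts straightforwardly from the underlying unstructured configuration spaces'' is not justified: the underlying spaces of $\op C_{m+n}^{G\times H}$ consist of skew, not axis-aligned, boxes, and Dunn's homotopy exploits axis-alignment essentially. Your own closing paragraph supplies the counter-argument --- nontrivial rotations in $\mathrm{im}(\rho_G)\cap O(m)$ and $\mathrm{im}(\rho_H)\cap O(n)$ can cause the $\Box^m$- and $\Box^n$-shadows of disjoint skew boxes to collide, destroying the inductive control on which Dunn's construction relies. Retracting onto equidiameter or freewheeling elements, as in the proof of Theorem~\ref{thm:cubes and euclidean}, does not resolve this: those retractions are not compatible with operadic composition, so they yield only aritywise information, which is not enough to identify a Boardman--Vogt tensor product. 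Second, matching the tree-model presentation of $(\op C_m^G\otimes\op C_n^H)(k)$ to the subspace of grid configurations is itself substantial even in the classical setting, and the skew version is not a formal consequence because the interchange relations now carry group data. Finally, the alternative route through Theorem~\ref{thm:cubes and euclidean} and Proposition~\ref{prop:additivity compatibility} has its own gap: to transfer an equivalence across that square you would need $\varphi\otimes\varphi$ to be a weak equivalence, and the paper gives no homotopy-invariance statement for the Boardman--Vogt tensor product of operads that would supply this.
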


We defer pursuit of this conjecture to future work, noting only that the classical case of $G=\Lambda(m)$ and $H=\Lambda(n)$ is known to hold by \cite{dunn, brinkmeier}.

We end this section by spelling out the relationship between this additivity map and the comparison of the previous section. Formation of the Cartesian product defines an enriched functor \[\Mfld_m^G\times\Mfld_n^H\to \Mfld_{m+n}^{G\times H},\] which is symmetric monoidal in each variable, since Cartesian products distribute over disjoint unions. This functor gives rise to an operad map \[\End_{\Mfld_m^G}(\mathbb{R}^m)\otimes \End_{\Mfld_n^H}(\mathbb{R}^n)\to \End_{\Mfld_{m+n}^{G\times H}}(\mathbb{R}^{m+n}).\] The following compatibility observation is essentially immediate from the constructions.

\begin{proposition}\label{prop:additivity compatibility}
The diagram of operads \[\xymatrix{
\op C^G_m\otimes\op C^H_n\ar[d]_-{\varphi\otimes\varphi}\ar[r]^-\iota&\op C^{G\times H}_{m+n}\ar[d]^-{\varphi}\\
\End_{\Mfld_m^G}(\mathbb{R}^m)\otimes \End_{\Mfld_n^H}(\mathbb{R}^n)\ar[r]& \End_{\Mfld_{m+n}^{G\times H}}(\mathbb{R}^{m+n})
}\] commutes.
\end{proposition}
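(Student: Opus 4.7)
The plan is to exploit the universal property of the Boardman--Vogt tensor product: an operad map out of $\op C_m^G \otimes \op C_n^H$ is uniquely determined by the pair of operad maps obtained by restricting along the canonical inclusions $\op C_m^G \to \op C_m^G \otimes \op C_n^H$ and $\op C_n^H \to \op C_m^G \otimes \op C_n^H$, provided the two images interchange in the sense of Definition \ref{definition:bv-op}. Both composites in the diagram are, by construction, operad maps out of the tensor product, so each automatically respects interchange; it therefore suffices to verify commutativity of the diagram after precomposing with each of the two canonical inclusions separately. By the evident symmetry of the construction, the checks on the $\op C_m^G$ and $\op C_n^H$ factors are formally identical, so I will only record the former.

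To run that check, I fix a skew little cube $c = \{(v_i, g_i)\}_{i=1}^k \in \op C_m^G(k)$ and trace it along both composites. Along $\varphi \circ \iota$: the map $\iota$ sends $c$ to the $G \times H$-skew little cube $\{((v_i, 0), (g_i, e))\}_{i=1}^k$ with $e$ the identity of $H$, and then $\varphi$ produces the $G \times H$-framed embedding $\amalg_k \mathbb{R}^{m+n} \to \mathbb{R}^{m+n}$ whose $i$th component has underlying embedding $(x_1, x_2) \mapsto (\rho(g_i) x_1 + v_i, x_2)$ and bundle map $((x_1, x_2), (h_1, h_2)) \mapsto ((\rho(g_i) x_1 + v_i, x_2), (g_i h_1, h_2))$. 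Along the other composite, $\varphi(c)$ is the $G$-framed embedding with $i$th underlying embedding $x_1 \mapsto \rho(g_i) x_1 + v_i$ and bundle map $(x_1, h_1) \mapsto (\rho(g_i) x_1 + v_i, g_i h_1)$; applying the operad map induced by the Cartesian product functor amounts to pairing this with the identity $H$-framed self-embedding of $\mathbb{R}^n$, which yields exactly the same underlying embedding and bundle map.

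The only remaining care concerns the homotopy data $h$ specifying a $G$-framed embedding, which must also match on both sides. Here matters are trivial: because the skew little cubes are genuinely linear, $D f_{v, g} = \rho(g)$ on the nose, so the homotopies in all of the framed embeddings above are constant, and are preserved tautologically by both $\iota$ and the Cartesian product functor. Consequently, the two composites agree on the image of $\op C_m^G$, and symmetrically on that of $\op C_n^H$. There is no real obstacle to this argument; it is a bookkeeping verification that the definitions of $\varphi$, $\iota$, and the Cartesian-product map have been arranged compatibly, as the statement of the proposition already suggests.
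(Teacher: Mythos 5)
Your proof is correct and matches the approach the paper has in mind: the paper offers no written argument, stating only that the proposition is ``essentially immediate from the constructions,'' and your write-up supplies precisely the bookkeeping verification that phrase gestures at---reducing to the two factors via the universal property of the Boardman--Vogt tensor product, tracing a skew little cube through both composites, and noting that the homotopy component is constant because $Df_{v,g}=\rho(g)$ exactly.
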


\subsection{Proof of Theorem \ref{thm:cubes and euclidean}}\label{section:cubes and euclidean proof}

Define a map $\pi:\op C^G_m(k)\to\Conf_k(\Box^m)\times G^k$ by \[\pi(v_1,g_1,\ldots, v_k,g_k)= (v_1,\ldots, v_k, g_1,\ldots, g_k).\] The proof will be complete upon verifying that $\pi$ is a weak equivalence, for in this case we have the commuting diagram \[\xymatrix{
\op C_m^G(k)\ar[d]_-\pi\ar[r]^-{\varphi(k)}&\op \End_{\Mfld^G_m}(\mathbb{R}^m)(k)\ar[d]^-{\scalebox{.8}{\rotatebox{-90}{$\simeq$}}}\\
\Conf_k(\Box^m)\times G^k\ar[r]^-{\cong}&\Conf_k^G(\mathbb{R}^m),}
\] and the claim follows by two-out-of-three. To show that $\pi$ is a weak equivalence---in fact, a homotopy equivalence---we adapt the argument of \cite[4.8]{may2}. 

We first prove the claim under the assumption that $\rho:G\to GL(m)$ is the inclusion of a subgroup. We begin with the observation that, from the definition of a dilation representation, there is a canonical homeomorphism \begin{align*}G&\xrightarrow{\cong} \big(G\cap O(m)\big)\times\Lambda(m)\\
g&\mapsto (o(g),\lambda(g))
\end{align*} obtained by applying the QR decomposition to the elements of $G$. In particular, $G\simeq G\cap O(m)$.

{Note that the $m$-dimensional box given by the image of the embedding associated to the skew little cube $(v,g)$ has sides of length equal to the eigenvalues of $\lambda(g)$, which will typically not coincide.}

{\begin{definition}
We say that an element $\left\{(v_i,g_i)\right\}_{i=1}^k\in \op C_m^G(k)$ is \emph{equidiameter} if $\lambda(g_i)$ is a scalar matrix that is independent of $i$. We say that $\{(v_i,g_i)\}_{i=1}^k$ is \emph{freewheeling} if it is equidiameter and if $\{(v_i, o_i\lambda(g_i))\}_{i=1}^k$ defines an element of $\op C_m^{O(m)\cdot\Lambda(m)}(k)$ for every $k$-tuple $(o_1,\ldots, o_k)\in O(m)^{\times k}$.
\end{definition}}

{In other words, a skew little cube is equidiameter if the images of its components are hypercubes of equal size, and it is freewheeling if these components may rotate freely in place without colliding or leaving $\Box^m$; equivalently, the corresponding embeddings extend to pairwise disjoint embeddings of the $m$-ball circumscribing $\Box^m$. Thus, an equidiameter element $\{(v_i,g_i)\}_{i=1}^k$ is freewheeling if and only if $\lambda(g_i)\leq \sqrt{2}\min_{1\leq i\neq j\leq k}|v_i-v_j|$.}

{\begin{lemma}\label{lem:freewheeling}
The subspace $\widetilde{\op C}_m^G(k)$ of freewheeling elements in $\op C_m^G(k)$ is a deformation retract.
\end{lemma}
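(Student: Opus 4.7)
The plan is to construct an explicit strong deformation retraction of $\op C_m^G(k)$ onto $\widetilde{\op C}_m^G(k)$ by uniformly shrinking every cube in place until it reaches a common scalar size small enough to guarantee freewheeling. Throughout I use the QR homeomorphism $g\mapsto (o(g),\lambda(g))$ with $o(g)\in G\cap O(m)$ and $\lambda(g)\in\Lambda(m)$ established just above the lemma.

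Given $(\mathbf{v},\mathbf{g})=\{(v_i,g_i)\}_{i=1}^k\in\op C_m^G(k)$, write the diagonal entries of $\lambda(g_i)$ as $\lambda_i^1,\ldots,\lambda_i^m$, and set
$$\ell_{\min}(\mathbf{v},\mathbf{g}):=\min_{i,j}\lambda_i^j,\qquad d_{\min}(\mathbf{v}):=\min_{i\ne j}|v_i-v_j|,\qquad b_{\min}(\mathbf{v}):=1-\max_i|v_i|_\infty,$$
with the convention that $d_{\min}=+\infty$ when $k\le 1$. Since the ball circumscribing $s\cdot\Box^m$ has radius $s\sqrt{m}$, an elementary Euclidean estimate produces a constant $c=c(m)>0$ such that any equidiameter configuration with common scalar $\lambda\le c\cdot\min(d_{\min},b_{\min})$ is freewheeling. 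Define the continuous positive function
$$\lambda_0(\mathbf{v},\mathbf{g}):=\min\bigl(\ell_{\min}(\mathbf{v},\mathbf{g}),\ c\cdot d_{\min}(\mathbf{v}),\ c\cdot b_{\min}(\mathbf{v})\bigr)$$
on $\op C_m^G(k)$.

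Now define $H\colon\op C_m^G(k)\times[0,1]\to\op C_m^G(k)$ by keeping the centres $v_i$ fixed and replacing each $g_i$ by
$$g_i(t):=o(g_i)\cdot\bigl((1-t)\lambda(g_i)+t\,\lambda_0(\mathbf{v},\mathbf{g})\,I\bigr).$$
A convex combination of positive diagonal matrices is a positive diagonal matrix, so each $g_i(t)\in G$; and because $\lambda_0\le\ell_{\min}$, every diagonal entry of $\lambda(g_i(t))$ is at most the corresponding entry of $\lambda(g_i)$. Hence the cube at index $i$ under $H_t$ is contained in the original cube, so pairwise disjointness and containment in $\Box^m$ are preserved; continuity of $H$ follows from continuity of QR. At $t=0$ we have $H_0=\id$ by inspection, while at $t=1$ every $\lambda(g_i(1))=\lambda_0\,I$, producing an equidiameter configuration that is freewheeling by the choice of $c$. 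Finally, if $(\mathbf{v},\mathbf{g})$ is already in $\widetilde{\op C}_m^G(k)$ with common scalar $s$, then $\ell_{\min}=s$ and the freewheeling hypothesis forces $s\le c\cdot d_{\min}$ and $s\le c\cdot b_{\min}$, so $\lambda_0=s$ and $H_t$ is stationary on $\widetilde{\op C}_m^G(k)$.

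The only delicate ingredient is the Euclidean-geometry step fixing the constant $c(m)$ and reconciling the threefold minimum defining $\lambda_0$ with whichever precise formulation of freewheeling is adopted; once $c$ is in hand, monotone shrinking makes the remaining verifications — continuity, well-definedness, and the three endpoint checks — routine.
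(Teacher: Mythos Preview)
Your approach is essentially the paper's: both arguments keep the centers $v_i$ and orthogonal parts $o(g_i)$ fixed and linearly interpolate the dilation part $\lambda(g_i)$ toward a small common scalar, the paper merely splitting this into two stages (first to equidiameter via $\lambda^{\min}$, then to freewheeling via $\lambda^{\mathrm{free}}$) while you do it in one pass. One small point on the reconciliation you flag at the end: the sharp freewheeling thresholds for pairwise non-collision and for remaining inside $\Box^m$ carry \emph{different} constants (roughly $1/(2\sqrt m)$ versus $1/\sqrt m$), so a single $c$ cannot make $s\le c\,d_{\min}$ and $s\le c\,b_{\min}$ simultaneously necessary and sufficient; replacing $\lambda_0$ by $\min(\ell_{\min},\,c_1 d_{\min},\,c_2 b_{\min})$ with the two sharp constants fixes the stationarity claim with no change to the rest of the argument.
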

\begin{proof}
First, we deform $\op C_m^G(k)$ onto the subspace of equidiameter elements using the homotopy \[\left\{(v_i,g_i)\right\}_{i=1}^k\mapsto \left\{(v_i, o(g_i)\left((1-t)\lambda(g_i)+t\lambda^{\min})\right)\right\},\] where $\lambda^{\min}$ is the scalar matrix on the the minimal eigenvalue of the $\lambda(g_i)$. One checks easily that this assignment is well-defined, continuous, and fixes the subspace of equidiameter elements pointwise. Second, we further deform this subspace onto the subspace $\widetilde{\op C}_m^G(k)$  of freewheeling elements using the homotopy \[\left\{(v_i,g_i)\right\}_{i=1}^k\mapsto \left\{(v_i, o(g_i)\left((1-t)\lambda(g_i)+t\lambda^{\mathrm{free}})\right)\right\},\] where $\lambda^\mathrm{free}$ is the scalar matrix on the minimum of $\lambda(g_i)$ and $\sqrt{2}\min_{1\leq i\neq j\leq k}|v_i-v_j|$.
\end{proof}}

A similar proof shows that the subspace $\widetilde{\op C}_{m}(k)$ of $\op C_{m}(k)$ consisting of equidiameter classical little cubes such that the corresponding embeddings extend to disjoint embeddings of circumscribed balls is also a deformation retract. From this fact and Lemma \ref{lem:freewheeling}, we now deduce the conclusion of Theorem \ref{thm:cubes and euclidean} in the case of a subgroup by observing the homeomorphism \[\widetilde{\op C}_m^G(k)\xrightarrow{\cong}\widetilde{\op C}_m(k)\times (G\cap O(m))^k\simeq \widetilde{\op C}_m(k)\times G^k.\] Indeed, every freewheeling skew little cube determines a unique equidiameter classical little cube with the same configuration of centers and the same common sidelength as the freewheeling skew little cube. The map shown sends a freewheeling skew little cube to this classical little cube together with the tuple of rotations witnessing it as skew. The inverse map is given by using a $k$-tuple of elements of $G\cap O(m)$ to rotate the components of a freewheeling classical little cube, obtaining thereby a skew little cube. This putative inverse is well-defined by the definition of free-wheeling, and it is clear that each composite is the identity.

We now reduce the general result to this case. Note that, via the homomorphism $\rho$, a $G$-skew little cube determines a $\rho(G)$-skew little cube in the obvious way.

\begin{lemma}
If $G$ is locally compact Hausdorff, then the diagram 
\[\xymatrix{
\op C_m^G(k)\ar[d]\ar[r]&\Conf_k(\Box^m)\times G^k\ar[d]\\
\op C_m^{\rho(G)}(k)\ar[r]&\Conf_k(\Box^m)\times \rho(G)^k
}\] is homotopy Cartesian.
\end{lemma}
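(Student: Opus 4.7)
The strategy is to observe that the square is in fact a strict pullback of topological spaces, and that the right-hand vertical map is a Serre fibration; the homotopy Cartesian conclusion then follows from the standard fact that pullbacks along fibrations compute homotopy pullbacks.

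First, I would verify the strict pullback claim by chasing the definitions. An element of the set-theoretic pullback is a tuple $\bigl((v_i,g_i)\bigr)_{i=1}^k$ with $v_i\in\Box^m$ and $g_i\in G$ such that the embeddings $f_{v_i,\rho(g_i)}\colon\Box^m\to\Box^m$ have pairwise disjoint images. However, by the very definition of the embedding, $f_{v,g}(x)=\rho(g)x+v=f_{v,\rho(g)}(x)$, so this disjointness condition is exactly the one defining $\op C_m^G(k)$. The two topologies agree as subspaces of $(\Box^m\times G)^k$ in the obvious way, yielding a homeomorphism between $\op C_m^G(k)$ and the strict pullback.

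Next I would verify that the right vertical map, which is $\mathrm{id}_{\Conf_k(\Box^m)}\times\rho^k$, is a Serre fibration. Since products and $k$-fold powers preserve Serre fibrations, this reduces to showing that the continuous homomorphism $\rho\colon G\to\rho(G)$ is a fibration, where $\rho(G)\subseteq GL(m)$ carries the subspace topology. The kernel $K:=\ker\rho$ is a closed normal subgroup of $G$, and by standard bundle theory for locally compact Hausdorff topological groups, the quotient $G\to G/K$ is a principal $K$-bundle, hence a Serre fibration. Combining this with the open mapping theorem for topological groups---which identifies the subspace topology on $\rho(G)\subseteq GL(m)$ with the quotient topology from $G$---one concludes that $\rho$ itself is a principal $K$-bundle, and in particular a Serre fibration.

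The main technical hurdle is confirming that $\rho\colon G\to\rho(G)$ is open onto its image; this is precisely where the local compactness hypothesis is used, via the open mapping theorem. Once this is established, the square, being a strict pullback along a fibration, is automatically homotopy Cartesian. I would remark that one could equally well argue via the left vertical map, which by an analogous analysis is a principal $K^k$-bundle sitting fiberwise over the right vertical, with the induced map on fibers the identity; this makes the comparison with the subgroup case (already proven) especially transparent.
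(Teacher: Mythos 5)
Your argument is essentially the paper's: you identify the square as a strict pullback of spaces and show the right vertical map is a Serre fibration, in both cases matching the paper's reasoning (the paper cites Sklyarenko for the fibration property of $G\to G/K$ with $K$ closed, and notes that the point-set fibers of the two vertical maps agree). You go one step further in pinpointing the hidden issue that the paper passes over silently: Sklyarenko's theorem applies to $\rho\colon G\to\rho(G)$ only after one knows that the subspace topology on $\rho(G)\subseteq GL(m)$ coincides with the quotient topology on $G/\ker\rho$, and you propose to deduce this from the open mapping theorem.

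The gap is that local compactness of $G$ alone does not license the open mapping theorem. The standard versions require $G$ to be $\sigma$-compact (or Lindel\"of) and the target to be a Baire space---for a subgroup of $GL(m)$ in the subspace topology this amounts to being locally compact, equivalently closed---and neither is implied by the hypotheses in force. The identification can genuinely fail: take $m=1$, $G=\mathbb{R}$ with the \emph{discrete} topology, and $\rho(t)=e^t$. This is a dilation representation of a locally compact Hausdorff group, but $\rho$ is a continuous bijection onto $\Lambda(1)$ that is not open, and $\mathrm{id}\times\rho^k$ is not a Serre fibration (a nonconstant path in $\Lambda(1)^k$ has no lift into a discrete space). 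One checks directly that the square fails to be homotopy Cartesian: already for $k=1$ the top map $\op C^G_1(1)\to\Conf_1(\Box^1)\times G$ fails to be a $\pi_0$-isomorphism (the source's components are indexed by $t<0$, the target's by all $t\in\mathbb R$), whereas homotopy Cartesianness together with the genuine weak equivalence along the bottom would force it to be a weak equivalence. To be fair, the paper's citation of Sklyarenko tacitly makes the same unsupported identification, so you have exposed a real lacuna rather than introduced one; but the claim that the open mapping step ``is precisely where the local compactness hypothesis is used'' overstates what local compactness of $G$ delivers.
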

\begin{proof}
We claim that the righthand map is a fibration and the diagram is a pullback square. The first claim follows from the general fact that the projection of a locally compact Hausdorff group onto the quotient by a closed subgroup is a fibration \cite[15]{sklyarenko}. The second claim is essentially obvious; for example, the point-set fiber of each vertical map is canonically identified with $\ker\rho^k$.
\end{proof}

The proof is now complete, as the bottom arrow in the above diagram is a weak equivalence by the case of a subgroup; therefore, since the diagram is homotopy Cartesian, the top arrow is a weak equivalence, as claimed.

\section{Proof of the main result}

\subsection{Multi-locality}\label{section:multilocality}

In this section, we introduce the local-to-global technique that we will employ in the proof of the main result. Although we make no use of the machinery of factorization homology as such, our point of view is very much motivated by that theory, and the reader is encouraged to consult \cite{ayala-francis} and \cite[5]{lurie2} for more in this direction.

\begin{definition}
Let $X$ be a topological space and $\mathcal{U}$ a collection of open subsets of $X$. We say that $\mathcal{U}$ is a \emph{Weiss cover} of $X$ if any finite subset of $X$ is contained in some element of $\mathcal{U}$. We say that $\mathcal{U}$ is a \emph{complete Weiss cover} if $\mathcal{U}$ contains a Weiss cover of $\bigcap_{\mathcal{U}_0} U$ for every finite subset $\mathcal{U}_0\subseteq \mathcal{U}$.
\end{definition}

As seen in the lemma below, an important class of complete Weiss covers can be constructed from the following type of cover.

\begin{definition} For any smooth manifold $M$ of dimension $m$, let $\Disk (M)$ denote the collection of open subsets of $M$ diffeomorphic to a disjoint union of copies of $\Bbb R^{m}$.
\end{definition}

\begin{lemma}\label{lem:complete weiss}
Let $M$ and $N$ be manifolds. The collection $\Disk(M)\times\Disk(N)$ is a complete Weiss cover of $M\times N$.
\end{lemma}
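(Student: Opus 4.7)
The plan is to unpack the two conditions in the definition of a complete Weiss cover and observe that everything follows from the single basic fact: given any finite set $S$ in an open subset $W$ of a smooth manifold $P$, one can find an element of $\Disk(P)$ containing $S$ and contained in $W$. This is immediate because $P$ is locally Euclidean: pick sufficiently small, pairwise disjoint coordinate balls around each point of $S$, small enough to lie inside $W$, and take their union.

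First I would verify the Weiss cover condition. Given a finite subset $S\subseteq M\times N$, let $S_M$ and $S_N$ denote its projections (which are finite). Applying the basic fact with $W=M$ yields $U\in\Disk(M)$ with $S_M\subseteq U$, and similarly $V\in\Disk(N)$ with $S_N\subseteq V$. Then $S\subseteq S_M\times S_N\subseteq U\times V$, so $U\times V$ is the required element of $\Disk(M)\times\Disk(N)$.

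Next I would check completeness. A finite collection $\mathcal{U}_0=\{U_i\times V_i\}_{i=1}^k\subseteq \Disk(M)\times\Disk(N)$ has intersection
\[
\bigcap_{i=1}^k (U_i\times V_i)\;=\;W_M\times W_N,\qquad W_M:=\bigcap_{i=1}^k U_i,\ W_N:=\bigcap_{i=1}^k V_i,
\]
where $W_M$ and $W_N$ are open in $M$ and $N$ respectively. Given any finite $S\subseteq W_M\times W_N$, apply the basic fact to $W_M\subseteq M$ and the finite set $S_M$, producing $A\in\Disk(M)$ with $S_M\subseteq A\subseteq W_M$, and similarly $B\in\Disk(N)$ with $S_N\subseteq B\subseteq W_N$. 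Then $A\times B\in \Disk(M)\times\Disk(N)$ satisfies $S\subseteq A\times B\subseteq W_M\times W_N$, so $\Disk(M)\times\Disk(N)$ contains a Weiss cover of the intersection.

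There is no real obstacle: the argument is essentially bookkeeping once one records the elementary observation about disjoint coordinate neighborhoods around finite point sets. The only mild subtlety is being careful that the Weiss cover condition need only be checked using products of disks (not arbitrary opens), and that the intersection $\bigcap U_i$ need not itself lie in $\Disk(M)$ — one only needs it to admit a Weiss cover by elements of $\Disk(M)$, which is exactly what the basic fact provides.
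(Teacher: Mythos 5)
Your proof is correct and takes essentially the same approach as the paper's: shrink pairwise-disjoint coordinate neighborhoods around the projected points to produce an element of $\Disk(M)\times\Disk(N)$, and for completeness observe that the intersection of such products is $W_M\times W_N$ and run the same argument there. The only cosmetic difference is that you isolate the construction as a reusable ``basic fact,'' whereas the paper invokes ``the previous argument'' directly via $\Disk(U)\times\Disk(V)\subseteq\Disk(M)\times\Disk(N)$.
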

\begin{proof}
Given $\{(x_i,y_i)\}_{i=1}^r\subseteq M\times N$, write $S=\{x_i\}_{i=1}^r$ and $T=\{y_i\}_{i=1}^r$. For each $x\in S$ and $y\in T$, choose Euclidean neighborhoods $x\in U_x\subseteq M$ and $y\in V_y\subseteq N$. By shrinking neighborhoods as necessary, we may arrange that each of the collections $\{U_x\}_{x\in S}$ and $\{V_y\}_{y\in T}$ is pairwise disjoint. Then $\coprod_{S\times T}U_x\times V_y$ contains $\{(x_i,y_i)\}_{i=1}^r$, and we conclude that $\Disk(M)\times\Disk(N)$ is a Weiss cover. 

For completeness, suppose we are given $U_j\in \Disk(M)$ and $V_j\in \Disk(N)$ for $1\leq j\leq s$, and set $U=\bigcap_{j=1}^s U_j$ and $V=\bigcap_{j=1}^sV_j$. Then $\bigcap_{j=1}^sU_j\times V_j=U\times V$ has Weiss cover $\Disk(U)\times \Disk(V)\subseteq\Disk(M)\times\Disk(N)$ by the previous argument.
\end{proof}

In particular, taking $N$ to be a singleton, it follows that $\Disk(M)$ is a complete Weiss cover of $M$.

{Since inclusions among open subsets of a $G$-framed manifold $M$ are canonically $G$-framed embeddings among $G$-framed manifolds, a complete Weiss cover $\op U$ gives rise to a functor $\op U\to \Mfld_m^G$, where the source is viewed as a poset and thereby a category. Moreover, for any functor $F:\Mfld_m^G\to \V$, the inclusions into $M$ of the elements of $\op U$ induce a natural transformation from the restriction of $F$ to $\op U$ to the constant functor with value $F(M)$.}

\begin{definition}
Let $\V$ be a model category, $F:\Mfld_m^G\to \V$ a functor, and $M$ a $G$-framed manifold. We say that $F$ is \emph{multi-local} on $M$ if, for any complete Weiss cover $\mathcal{U}$ of $M$, the natural map \[\hocolim_{\mathcal{U}}F\overset{}{\longrightarrow} F(M)\] is an isomorphism in the homotopy category. We say that $F$ is \emph{multi-local} if $F$ is multi-local on $M$ for every $G$-framed manifold $M$.
\end{definition}

Note that, in this definition, we do not require that $\V$ and $F$ be topologically enriched, though they may well be in examples of interest. Indeed, since the criterion for multi-locality involves restricting to the ordinary categories $\op U$, the presence or absence of such an enrichment is inconsequential.

\begin{lemma}\label{lem:conf multi-local}
For each $k\geq0$, the functor $\Conf_k^G:\Mfld_G\to \cat{Top}$ is multi-local.
\end{lemma}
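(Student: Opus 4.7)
The plan is to reduce the problem to multi-locality of the unframed functor $\Conf_k$, and then to verify that claim by recognizing $\{\Conf_k(U)\}_{U\in \op U}$ as a poset-indexed open cover of $\Conf_k(M)$ whose containment posets are contractible.

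For the reduction, the key point is that the projection $\Conf_k^G(U)\to \Conf_k(U)$ is the restriction of the principal $G^k$-bundle $p\colon \Conf_k^G(M)\to\Conf_k(M)$ to the open subset $\Conf_k(U)\subseteq\Conf_k(M)$. Using the Bousfield-Kan formula, the levelwise $G^k$-actions on $\Conf_k^G(U)$ assemble into a $G^k$-action on $\hocolim_{\op U}\Conf_k^G$ that is free (since each summand $\Conf_k^G(U_0)\times \Delta^n$ has free action), with orbit space canonically identified with $\hocolim_{\op U}\Conf_k$. The natural map $\hocolim_{\op U}\Conf_k^G\to \Conf_k^G(M)$ is therefore a $G^k$-equivariant map of free $G^k$-spaces covering the map $\hocolim_{\op U}\Conf_k\to \Conf_k(M)$, and a comparison of the associated long exact sequences of homotopy groups (fiber $G^k$ on both sides) shows that the former map is a weak equivalence if and only if the latter is. So it suffices to treat $\Conf_k$.

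For the unframed case, observe that $\op V:=\{\Conf_k(U)\}_{U\in\op U}$ is an open cover of $\Conf_k(M)$: given a configuration $(x_1,\dots,x_k)$, the Weiss property produces some $U\in \op U$ with $\{x_1,\dots,x_k\}\subseteq U$, so the configuration lies in $\Conf_k(U)$. Moreover, for each configuration, the subposet
\[\op U_{(x_1,\ldots,x_k)}:=\{U\in\op U:\{x_1,\dots,x_k\}\subseteq U\}\]
is downward filtered, hence has weakly contractible nerve. Indeed, given $U_1,U_2\in \op U_{(x_1,\ldots,x_k)}$, completeness of $\op U$ furnishes a Weiss cover of $U_1\cap U_2$ within $\op U$, and applying the Weiss property to the finite set $\{x_1,\dots,x_k\}\subseteq U_1\cap U_2$ produces $W\in\op U$ with $W\subseteq U_1\cap U_2$ and $\{x_1,\dots,x_k\}\subseteq W$, providing a lower bound.

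To conclude, we invoke the standard local-to-global principle: for an open cover $\{V_i\}_{i\in I}$ of a paracompact space $X$ indexed by a poset (with $V_i\subseteq V_j$ whenever $i\le j$) such that each containment poset $I_x=\{i\in I:x\in V_i\}$ is weakly contractible, the canonical map $\hocolim_I V_i\to X$ is a weak equivalence. (This can be extracted, for instance, from Quillen's Theorem A applied fiberwise, or from a hypercover/partition-of-unity argument; it is implicit in the local-to-global technology of factorization homology.) Applying this principle to $\op V$ yields $\hocolim_{\op U}\Conf_k\xrightarrow{\simeq}\Conf_k(M)$, which combined with the reduction completes the proof. The main obstacle is the clean invocation of this local-to-global statement; a secondary technical point is the justification that the Bousfield-Kan hocolim carries a free $G^k$-action with quotient the unframed hocolim, but this is transparent on the level of the explicit coend formula.
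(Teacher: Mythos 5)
Your proposal arrives at the right result, and the underlying engine---the fact that an open cover of a space whose containment posets at every point are cofiltered induces a weak equivalence from the homotopy colimit---is exactly the tool the paper uses. However, you take an unnecessary detour, and you leave that engine as an unreferenced ``standard local-to-global principle.''

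The paper's proof is considerably shorter: it applies the cited result (Dugger--Isaksen, \emph{Topological hypercovers and $\mathbb{A}^1$-realizations}, Lemma 4.5 and Theorem 4.6) directly to the collection $\op U_k := \{\Conf_k^G(U) : U\in\op U\}$. Since each $\Conf_k^G(U)$ is the preimage of the open set $\Conf_k(U)\subseteq\Conf_k(M)$ under the bundle projection, the $\Conf_k^G(U)$ form an open cover of $\Conf_k^G(M)$, and because preimages commute with finite intersections, the complete-Weiss-cover property of $\op U$ immediately yields the Dugger--Isaksen completeness of $\op U_k$. There is no need to first pass to the unframed case and then lift along the $G^k$-bundle. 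Your reduction via the Bousfield--Kan formula and a long-exact-sequence comparison is plausible, but it carries a hidden technical burden you don't address: to run the LES comparison you need the quotient map $\hocolim_{\op U}\Conf_k^G\to\hocolim_{\op U}\Conf_k$ to be a (quasi-)fibration with fiber $G^k$, which is not automatic from freeness of the $G^k$-action on the realization (particularly for non-compact $G$). Since the direct argument sidesteps the equivariant detour entirely, the cleanest fix is simply to drop the reduction and apply the Dugger--Isaksen criterion to the framed cover itself, exactly as you already do for the unframed cover in the second half of your argument.
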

\begin{proof}
If $\op U$ is a complete Weiss cover of $M$, then the collection \[\op U_k:=\{\Conf_k^G(U): U\in \op U\}\] is a complete cover of $\Conf_k^G(U)$ in the sense of \cite[4.5]{dugger-isaksen}, and the claim follows from \cite[4.6]{dugger-isaksen}.
\end{proof}

\subsection{Configuration modules}

In this section, we introduce our model for the configuration spaces of a $G$-framed manifold and prove the main result. First, we outline the essential features that we wish for such a model. We write $\Disk_m^G\subseteq\Mfld_m^G$ for the full subcategory of $G$-framed manifolds that are $G$-framed diffeomorphic to a disjoint union of copies of $\mathbb{R}^m$ with its standard $G$-structure.  Recall the operad map $\iota\colon\op C^G_m\otimes\op C_n^H\to \op C_{m+n}^{G\times H}$ constructed in Section \ref{section:additivity}.

\begin{theorem}\label{thm:cube model}
Let $G\to GL(m)$ and $H\to GL(n)$ be dilation representations of locally compact Hausdorff groups. If Conjecture \ref{conj:additivity} holds for $G$ and $H$, then there is a topologically enriched functor $\op C^G_{(-)}:\cat{Mfld}_m^G\to \Mod{\op C_m^G}(\cat{Top})$ (resp. $n$, $H$) equipped with
\begin{enumerate}
\item a collection of natural $\Sigma_k$-equivariant weak equivalences \[\op C^G_{(-)}(k)\overset{\simeq}{\longrightarrow} \Conf_k^G(-)\] (resp. $H$), and
\item a natural transformation \[\op C^G_{(-)}\otimes \op C^H_{(-)}\to \iota^*(\op C^{G\times H}_{(-\times-)})\] inducing an isomorphism $\op C^G_{U}\otimes^\mathbb{L} \op C^H_{V}\cong \Ho(\iota^*)(\op C^{G\times H}_{U\times V})$ in $\Ho(\Mod{\op C_m^G\otimes\op C_n^H}(\cat {Top}))$ for all $(U,V)\in\Disk_m^G\times\Disk_n^H$.
\end{enumerate}
\end{theorem}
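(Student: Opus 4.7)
The plan is to set $\op C^G_M := \varphi^*\,\HHom_{\Mfld_m^G}(\mathbb{R}^m, M)$, where the $\HHom$-construction is the enriched Yoneda module of Example \ref{example:yoneda} and $\varphi\colon\op C_m^G\to \End_{\Mfld_m^G}(\mathbb{R}^m)$ is the operadic weak equivalence of Theorem \ref{thm:cubes and euclidean}. This is topologically enriched in $M$ because composition in $\Mfld_m^G$ is continuous, and its functoriality in $M$ is inherited from $\HHom$. Property (1) is then immediate: the arity-$k$ component of $\op C^G_M$ is $\Emb^G(\amalg_k\mathbb{R}^m, M)$ (independent of the pullback along $\varphi$, which only alters the module structure), and Proposition \ref{prop:embedding and conf} supplies the natural $\Sigma_k$-equivariant weak equivalence to $\Conf_k^G(M)$.

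For the natural transformation in (2), I would use that the Cartesian product is a topologically enriched bifunctor $\Mfld_m^G\times\Mfld_n^H\to\Mfld_{m+n}^{G\times H}$ that is symmetric monoidal in each variable (Example \ref{example:cartesian product}). By the universal property of the Boardman-Vogt tensor product, it induces both an operad map $\eta\colon \End(\mathbb{R}^m)\otimes\End(\mathbb{R}^n)\to\End(\mathbb{R}^{m+n})$ and a natural transformation of $\eta$-pulled-back modules $\HHom(\mathbb{R}^m,-)\otimes\HHom(\mathbb{R}^n,-)\to\eta^*\HHom(\mathbb{R}^{m+n},-\times -)$, whose arity-$(kl)$ component takes a pair of embeddings to their Cartesian product. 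Pulling back along $\varphi\otimes\varphi$ and invoking the commutative diagram of Proposition \ref{prop:additivity compatibility}, which identifies $(\varphi\otimes\varphi)^*\eta^*$ with $\iota^*\varphi^*$, produces the natural transformation $\op C^G_{(-)}\otimes\op C^H_{(-)}\to\iota^*(\op C^{G\times H}_{(-\times -)})$.

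To verify the derived equivalence on $\Disk_m^G\times\Disk_n^H$, the essential case is $(U,V)=(\mathbb{R}^m,\mathbb{R}^n)$. By construction, $\op C^G_{\mathbb{R}^m}=\varphi^*\End(\mathbb{R}^m)$, and $\End(\mathbb{R}^m)$ is the free rank-1 right module over itself; since $\varphi$ is a weak equivalence of aritywise cofibrant topological operads, $\varphi$ itself is a weak equivalence of right $\op C_m^G$-modules from the free rank-1 module $\op C_m^G$ to $\varphi^*\End(\mathbb{R}^m)$. Invoking Proposition \ref{prop:tensor with cofibrant} together with the cofibrancy of $\op C_m^G$ and $\op C_n^H$ as right modules over themselves, the derived tensor product $\op C^G_{\mathbb{R}^m}\otimes^{\mathbb{L}} \op C^H_{\mathbb{R}^n}$ is computed as $\op C_m^G\otimes\op C_n^H$, the free rank-1 module over $\op C_m^G\otimes\op C_n^H$. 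On the other side, the same argument identifies $\op C^{G\times H}_{\mathbb{R}^{m+n}}$ with the free rank-1 module over $\op C_{m+n}^{G\times H}$, and Conjecture \ref{conj:additivity} together with the Quillen equivalence of \cite[15.B]{fresse} implies that $\Ho(\iota^*)$ carries this to the free rank-1 module over $\op C_m^G\otimes\op C_n^H$. Both sides thus match the free rank-1 module, and the natural transformation we have constructed realizes this identification. General $(U,V)\in\Disk_m^G\times\Disk_n^H$ reduce to this case because $\HHom(\mathbb{R}^m,-)$ carries disjoint unions of framed manifolds to coproducts of right modules, and both $\otimes^{\mathbb{L}}$ and $\Ho(\iota^*)$ preserve coproducts.

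The main obstacle is the bookkeeping required to show that the concrete natural transformation constructed above, rather than some abstract zigzag in the homotopy category, realizes the stated isomorphism in $\Ho(\Mod{\op C_m^G\otimes\op C_n^H}(\cat{Top}))$. This entails producing compatible cofibrant replacements of $\op C^G_U$ and $\op C^H_V$ whose Boardman-Vogt tensor product admits a canonical comparison to $\iota^*\op C^{G\times H}_{U\times V}$, then checking that this composite is the same weak equivalence one obtains by threading the free-rank-1 identification through the Quillen equivalences induced by $\varphi$ and $\iota$. Aritywise $\Sigma$-cofibrancy of the skew cube operads (which for locally compact Hausdorff $G$ should follow from the product decomposition used in Section \ref{section:cubes and euclidean proof}) enters precisely at this step, and is where I expect the technical work to concentrate.
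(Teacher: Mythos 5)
Your definition of $\op C^G_{(-)}$, the treatment of point (1), and the construction of the natural transformation in (2) via the Cartesian-product functor and Proposition~\ref{prop:additivity compatibility} all coincide with the paper's argument, and your identification of $\op C^G_{\mathbb{R}^m}\otimes^{\mathbb{L}}\op C^H_{\mathbb{R}^n}$ and $\Ho(\iota^*)(\op C^{G\times H}_{\mathbb{R}^{m+n}})$ as free rank-1 modules is correct and is essentially what makes the paper's top-row calculation work. However, the reduction from general $(U,V)\in\Disk_m^G\times\Disk_n^H$ to the case $(\mathbb{R}^m,\mathbb{R}^n)$ contains a genuine error: $\HHom_{\Mfld_m^G}(\mathbb{R}^m,-)$ does \emph{not} carry disjoint unions to coproducts of right modules. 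Indeed,
\[
\op C^G_{\amalg_k\mathbb{R}^m}(r)=\Emb^G(\amalg_r\mathbb{R}^m,\amalg_k\mathbb{R}^m)\cong\coprod_{r_1+\cdots+r_k=r}\prod_{i=1}^k\Emb^G(\amalg_{r_i}\mathbb{R}^m,\mathbb{R}^m)\times_{\prod_i\Sigma_{r_i}}\Sigma_r,
\]
which is the $k$-fold \emph{graded tensor power} $\Emb^G(-,\mathbb{R}^m)^{\odot k}$, not the coproduct $\coprod_k\Emb^G(-,\mathbb{R}^m)$; already at arity $2$ the coproduct misses the cross terms. Consequently, preservation of coproducts by $\otimes^{\mathbb{L}}$ and $\Ho(\iota^*)$ is not the relevant compatibility, and the intended reduction fails.

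What the paper does instead is handle all of $\Disk_m^G$ at once: it produces an explicit weak equivalence of modules $\Sigma_k\circ\op C_m^G\to\op C^G_{\amalg_k\mathbb{R}^m}$ (with $\Sigma_k$ the symmetric sequence concentrated in arity $k$), using exactly the graded-tensor-power identification above together with Proposition~\ref{prop:embedding and conf}, and then feeds the free modules $\Sigma_k\circ\op C_m^G$, $\Sigma_\ell\circ\op C_n^H$ into a calculation $(\Sigma_k\circ\op C_m^G)\otimes(\Sigma_\ell\circ\op C_n^H)\cong\Sigma_{k+\ell}\circ(\op C_m^G\otimes\op C_n^H)\xrightarrow{\simeq}\iota^*(\Sigma_{k+\ell}\circ\op C_{m+n}^{G\times H})$, the last arrow being where Conjecture~\ref{conj:additivity} enters. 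Because these are free modules on cofibrant symmetric sequences, they are already cofibrant, so no $\Sigma$-cofibrancy of the skew-cube operads themselves is needed---this also addresses your concern at the end, which is pointed in the wrong direction: the technical work is in the algebra of the $\odot$- and $\square$-products on free modules and in arranging the final compatibility square, not in cofibrant replacement of the operads.
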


Assuming the existence of functors with these properties, the main result of this article follows easily.

\begin{theorem}\label{thm:fancy main}
Let $G\to GL(m)$ and $H\to GL(n)$ be dilation representations of locally compact Hausdorff groups, $M$ a $G$-framed $m$-manifold, and $N$ an $H$-framed $n$-manifold. If Conjecture \ref{conj:additivity} holds for $G$ and $H$, then there is a natural isomorphism 
\[\Ho( \iota^{*})(\op C_{M\times N}^{G\times H})\cong \op C_M^G\otimes^\mathbb{L} \op C_N^H\] in $ \Ho(\Mod{\op C_m^G\otimes\op C_n^H}(\cat {Top})).$
\end{theorem}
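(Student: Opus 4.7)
The plan is to use the complete Weiss cover $\mathcal U := \Disk(M) \times \Disk(N)$ of $M \times N$ from Lemma \ref{lem:complete weiss} to reduce the theorem to its content on Euclidean pieces, which is supplied directly by Theorem \ref{thm:cube model}(2). The engine of the reduction is multi-locality, applied separately to the left- and right-hand sides of the claimed equivalence, after which both sides become the same homotopy colimit indexed by $\mathcal U$.

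For the left-hand side, I would first upgrade Lemma \ref{lem:conf multi-local} to the assertion that the functor $\op C^{G\times H}_{(-)}\colon \Mfld_{m+n}^{G\times H}\to \Mod{\op C_{m+n}^{G\times H}}(\cat{Top})$ is itself multi-local. Since weak equivalences and homotopy colimits in the projective model structure on right modules are computed arity-wise by Theorem \ref{thm:diag-model-cat}, this reduces to the arity-wise equivalence of Theorem \ref{thm:cube model}(1) together with Lemma \ref{lem:conf multi-local}. Applying this on $\mathcal U$, and invoking Lemma \ref{lem:module hocolim} to push $\Ho(\iota^*)$ past $\hocolim$, I would obtain
\[\Ho(\iota^*)\bigl(\op C^{G\times H}_{M\times N}\bigr)\;\simeq\;\hocolim_{(U,V)\in\mathcal U}\Ho(\iota^*)\bigl(\op C^{G\times H}_{U\times V}\bigr)\;\simeq\;\hocolim_{(U,V)\in\mathcal U}\op C^G_U\otimes^{\mathbb L}\op C^H_V,\]
the final step being Theorem \ref{thm:cube model}(2) on each Euclidean piece $U\times V$.

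For the right-hand side, the same multi-locality argument, applied now to the complete Weiss covers $\Disk(M)$ and $\Disk(N)$ of Lemma \ref{lem:complete weiss}, gives $\op C^G_M \simeq \hocolim_{U\in\Disk(M)} \op C^G_U$ and $\op C^H_N \simeq \hocolim_{V\in\Disk(N)} \op C^H_V$. By Proposition \ref{prop:tensor with cofibrant}, $\otimes$ is a left Quillen bifunctor, so $\otimes^{\mathbb L}$ commutes with homotopy colimits in each variable; combining these,
\[\op C^G_M\otimes^{\mathbb L}\op C^H_N \;\simeq\; \hocolim_{(U,V)\in \Disk(M)\times\Disk(N)} \op C^G_U\otimes^{\mathbb L}\op C^H_V.\]
Since the two indexing diagrams agree, so do the two homotopy colimits, and the identification is manifestly natural in $(M,N)$.

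The main obstacle is not this reduction but Theorem \ref{thm:cube model} itself, which delivers the functorial point-set model $\op C^G_{(-)}$ and the natural comparison map entering the second step of the left-hand side computation. Producing it requires realizing the configuration space of each $G$-framed manifold as a genuine, functorially varying right $\op C^G_m$-module --- not merely an algebra up to homotopy --- and then constructing a natural transformation $\op C^G_{(-)}\otimes\op C^H_{(-)}\to\iota^*\bigl(\op C^{G\times H}_{(-\times -)}\bigr)$ whose value on Euclidean pieces is a derived equivalence. The latter is precisely where Conjecture \ref{conj:additivity} enters, compiled together with Theorem \ref{thm:cubes and euclidean} via Proposition \ref{prop:additivity compatibility}; one must also verify enough cofibrancy in the chosen model to ensure that the pointwise tensor product computes $\otimes^{\mathbb L}$ on disks.
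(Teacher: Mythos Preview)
Your proposal is correct and follows essentially the same approach as the paper: both use multi-locality of $\op C^G_{(-)}$ together with the complete Weiss cover $\Disk(M)\times\Disk(N)$ to reduce to Euclidean pieces, invoke Theorem \ref{thm:cube model}(2) there, and use Proposition \ref{prop:tensor with cofibrant} to distribute $\otimes^{\mathbb L}$ over the homotopy colimits. The only cosmetic difference is organizational---the paper presents the argument as a single chain of seven isomorphisms from $\Ho(\iota^*)(\op C^{G\times H}_{M\times N})$ to $\op C^G_M\otimes^{\mathbb L}\op C^H_N$, whereas you compute each side separately as $\hocolim_{(U,V)}\op C^G_U\otimes^{\mathbb L}\op C^H_V$ and then match them; the ingredients and logical structure are the same.
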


\begin{proof}
The essential point is that the functor $\op C^G_{(-)}:\cat{Mfld}_m^G\to \Mod{\op C_m^G}(\cat{Top})$ is multi-local. Indeed, since homotopy colimits of modules are computed aritywise in $\cat{Top}$ by Lemma \ref{lem:module hocolim}, it suffices to check that $\op C^G_{(-)}(k):\cat{Mfld}_m^G\to\cat{Top}$ is multi-local, which follows from Lemma \ref{lem:conf multi-local} and point (2) of Theorem \ref{thm:cube model}, since multi-locality is invariant under weak equivalence of functors. 

This claim, together with Lemma \ref{lem:complete weiss}, furnishes the first, fifth, and seventh in the following sequence of isomorphisms in $ \Ho(\Mod{\op C_m^G\otimes\op C_n^H}(\cat {Top}))$: 
\begin{align*}
\Ho(\iota^*)(\op C_{M\times N}^{G\times H})&\xleftarrow{\cong}\Ho (\iota^*)\left(\hocolim_{(U,V)\in \Disk(M)\times\Disk(N)}\op C_{U\times V}^{G\times H}\right)\\
&\xleftarrow{\cong}\hocolim_{(U,V)\in \Disk(M)\times\Disk(N)}\Ho(\iota^*)(\op C_{U\times V}^{G\times H})\\
&\xleftarrow{\cong}\hocolim_{(U,V)\in \Disk(M)\times\Disk(N)}\op C_U^G\otimes^\mathbb{L}\op C_V^H \\
&\xrightarrow{\cong}\hocolim_{U\in\Disk(M)}\left(\op C_U^G\otimes^\mathbb{L}\hocolim_{V\in\Disk(N)}\op C_V^H\right)\\
&\xrightarrow{\cong}\hocolim_{U\in\Disk(M)}\op C_U^G\otimes^\mathbb{L} \op C_N^H\\
&\xrightarrow{\cong}\left(\hocolim_{U\in\Disk(M)}\op C_U^G\right)\otimes^\mathbb{L} \op C_N^H\\
&\xrightarrow{\cong}\op C_M^G\otimes^\mathbb{L}\op C_N^H.
\end{align*} The second again uses the fact that restriction along maps of operads preserves homotopy colimits of modules; the third is induced by the map of Theorem \ref{thm:cube model}(2); and the fourth and sixth follow from the observation that the derived Boardman-Vogt tensor product distributes over homotopy colimits in each variable, which is a consequence of Proposition \ref{prop:tensor with cofibrant}.
\end{proof}

Since Conjecture \ref{conj:additivity} is known to hold in the case $G=\Lambda(m)$ and $H=\Lambda(n)$, this completes the proof of Theorem \ref{thm:main}.

\begin{proof}[Proof of Theorem \ref{thm:cube model}]
We define the functor $\op C^G_{(-)}$ to be the composite \[\xymatrix{\op C_{(-)}^G:\Mfld_m^G\ar[rrr]^-{\HHom_{\Mfld_m^G}(\mathbb{R}^m,-)}&&&\Mod{\End_{\Mfld_m^G}(\mathbb{R}^m)}(\cat{Top})\ar[r]^-{\varphi^*}&\Mod{\op C_m^G}(\cat{Top}),}\] where the first arrow is the restricted Yoneda embedding of Example \ref{example:yoneda}. For point (1), we have the equivalence
\[\op C_{M}^G(k)= \Emb^G(\amalg_k\mathbb{R}^m,M)\overset{\sim}{\longrightarrow}\Conf_k^G(M)\] of Proposition \ref{prop:embedding and conf}.

To define the natural transformation of point (2), we note that, by the commuting diagram \[\xymatrix{
\op C_m^G\otimes\op C_n^H\ar[d]_-{\varphi\otimes\varphi}\ar[r]^-\iota&\op C_{m+n}^{G\times H}\ar[d]^-{\varphi}\\
\End_{\Mfld_m^G}(\mathbb{R}^m)\otimes\End_{\Mfld_n^H}(\mathbb{R}^n)\ar[r]^-\jmath&\End_{\Mfld_{m+n}^{G\times H}}(\mathbb{R}^{m+n})
}\] of Proposition \ref{prop:additivity compatibility}, it suffices to exhibit a natural transformation \[\HHom_{\Mfld_m^G}(\mathbb{R}^m,-)\otimes \HHom_{\Mfld_n^H}(\mathbb{R}^n,-)\to \jmath^*\HHom_{\Mfld_{m+n}^{G\times H}}(\mathbb{R}^{m+n},-).\] In order to accomplish this task, we will appeal to the commuting diagram \[\xymatrix{
\cat F(\End_{\Mfld_m^G}(\mathbb{R}^m))\times\cat F(\End_{\Mfld_n^H}(\mathbb{R}^n))\ar[d]_-{\mu}\ar[r]&\Disk_m^G\times\Disk_n^H\ar[r]\ar[dd]^-{\Pi}&\Mfld_m^G\times\Mfld_n^H\ar[dd]^-{\Pi}\\ 
\cat F(\End_{\Mfld_m^G}(\mathbb{R}^m)\otimes\End_{\Mfld_n^H}(\mathbb{R}^n))\ar[d]_-{\cat F(\jmath)}\\
\cat F(\End_{\Mfld_{m+n}^{G\times H}}(\mathbb{R}^{m+n}))\ar[r]&\Disk_{m+n}^{G\times H}\ar[r]&\Mfld_{m+n}^{G\times H},
}\] where $\mu$ is the enriched functor of Section \ref{sec:opmod}, and $\Pi$ is the enriched functor sending a $G$-framed manifold $M$ and an $H$-framed manifold $N$ to the product $M\times N$ with its canonical $G\times H$-framing (see Section \ref{section:modules as presheaves} for a reminder of the definition of $\cat F$).

Now, by definition, the lifted Boardman-Vogt tensor product is the left Kan extension of the functor $\HHom_{\Mfld_m^G}(\mathbb{R}^m,-)\boxtimes \HHom_{\Mfld_n^H}(\mathbb{R}^n,-)$ along the functor $\mu$. Thus, by the universal property of the left adjoint, exhibiting the desired natural transformation is equivalent to giving a map from $\HHom_{\Mfld_m^G}(\mathbb{R}^m,-)\boxtimes \HHom_{\Mfld_n^H}(\mathbb{R}^n,-)$ to the restriction of $\HHom_{\Mfld_{m+n}^{G\times H}}(\mathbb{R}^{m+n},-)$ along the composite $\cat F(\jmath)\mu$. Since the source of the putative natural transformation is obtained by restricting $\Emb^G(-,M)\times\Emb^H(-,N)$ along the top horizontal composite, while the target is obtained by restricting $\Emb^{G\times H}(-, M\times N)$ along the full counterclockwise composite, the natural inclusion of functors $\Emb^G(-,M)\times\Emb^H(-,N)\hookrightarrow \Emb^{G\times H}(-, M\times N)\circ \Pi$ supplies the desired natural transformation.

To see that this natural transformation induces an isomorphism $\op C^G_{U}\otimes^\mathbb{L} \op C^H_{V}\cong \Ho(\iota^*)(\op C^{G\times H}_{U\times V})$ in $\Ho(\Mod{\op C_m^G\otimes\op C_n^H}(\cat {Top}))$ for all $(U,V)\in\Disk_m^G\times\Disk_n^H$, it suffices to supply weak equivalences $\Sigma_k\circ \op C_m^G\xrightarrow{\simeq} \op C^G_{\amalg_k\mathbb{R}^m}$, where $\Sigma_k$ is considered as a symmetric sequence concentrated in arity $k$ (resp. $(\ell,n, H)$, $(k+\ell, m+n, G\times H)$), and to fit these maps into a commuting diagram 
\begin{equation}\label{diagram}
\xymatrix{
(\Sigma_k\circ\op C_m^G)\otimes (\Sigma_\ell\circ \op C_n^H)\ar[d]\ar[r]&\iota^*(\Sigma_{k+\ell}\circ\op C_{m+n}^{G\times H})\ar[d]\\
\op C_{\amalg_k\mathbb{R}^m}^G\otimes \op C_{\amalg_\ell\mathbb{R}^n}^H\ar[r]&\iota^*(\op C_{\amalg_{k+\ell}\mathbb{R}^{m+n}}^{G\times H})
}
\end{equation}
of $\op C_m^G\op \otimes \op C_n^H$-modules. Indeed, the top arrow is a weak equivalence by the calculation \begin{align*}
(\Sigma_k\circ\op C_m^G)\otimes (\Sigma_\ell\circ \op C_n^H)&\cong (\Sigma_k\Box\Sigma_\ell)\circ\op C_m^G\otimes\op E_n^H\\
&\cong \Sigma_{k+\ell}\circ\op C_m^G\otimes\op C_n^H\\
&\xrightarrow{\simeq}\iota^*(\Sigma_{k+\ell}\circ\op C_{m+n}^{G\times H}),
\end{align*} where $\Box$ denotes the matrix tensor product (see Example \ref{ex:matrix-mon}). Here we have used Conjecture \ref{conj:additivity} in the last step, and the upper lefthand expression computes $\op C^G_{\amalg_k\mathbb{R}^m}\otimes^\mathbb{L}\op C^H_{\amalg_\ell\mathbb{R}^n}$, since $\Sigma_k\circ\op C_m^G$ is a free, hence cofibrant, $\op C_m^G$-module (resp. $\ell$, $n$, $H$).

The desired weak equivalence $\Sigma_k\circ \op C_m^G\to \op C^G_{\amalg_k\mathbb{R}^m}$ is supplied by the universal property of the free module and the $\Sigma_k$-equivariant map \[\Sigma_k\to \Emb^G(\amalg_k\mathbb{R}^m,\amalg_k\mathbb{R}^m)\] defined by sending a permutation $\sigma$ to the unique embedding that is the identity componentwise and induces $\sigma$ on connected components (resp. $(\ell, n, H)$, $(k+\ell, m+n, G\times H)$). To see that this map is an equivalence, we appeal to the calculation \begin{align*}
\op C^G_{\amalg_k\mathbb{R}^m}(r)&=\Emb^G(\amalg_r\mathbb{R}^m, \amalg_k\mathbb{R}^m)\\
&\cong \coprod_{r_1+\cdots +r_k=r}\prod_{i=1}^k\Emb^G(\amalg_{r_i}\mathbb{R}^m,\mathbb{R}^m)\times_{\prod_{i=1}^k\Sigma_{r_i}}\Sigma_r\\
&\cong\Emb^G(-,\mathbb{R}^m)^{\odot k}(r)\\
&\xleftarrow{\simeq} (\Sigma_1\circ \op C_m^G)^{\odot k}(r)\\
&\cong(\Sigma_1^{\odot k}\circ\op C_m^G)(r)\\
&\cong (\Sigma_k\circ\op C_m^G)(r)
\end{align*} (recall that $\odot$ denotes the graded tensor product of symmetric sequences defined in Section \ref{sec:operads and modules}). Finally, after unwinding universal properties, diagram (\ref{diagram}) commutes because
\[\xymatrix{
\Sigma_k\times\Sigma_\ell\ar[d]\ar[r]&\Sigma_{k+\ell}\ar[d]\\
\Emb^G(\amalg_k\mathbb{R}^m,\amalg_k\mathbb{R}^m)\times\Emb^H(\amalg_\ell\mathbb{R}^m,\amalg_\ell\mathbb{R}^m)\ar[r]&\Emb^{G\times H}(\amalg_{k+\ell}\mathbb{R}^m, \amalg_{k+\ell}\mathbb{R}^m)
}\] does. 
\end{proof}

\bibliographystyle{amsalpha} 
\bibliography{raao.bib}

\providecommand{\bysame}{\leavevmode\hbox to3em{\hrulefill}\thinspace}
\providecommand{\MR}{\relax\ifhmode\unskip\space\fi MR }
\providecommand{\MRhref}[2]{%
  \href{http://www.ams.org/mathscinet-getitem?mr=#1}{#2}
}
\providecommand{\href}[2]{#2}
\begin{thebibliography}{dBW13}

\bibitem[AF15]{ayala-francis}
D.~Ayala and J.~Francis, \emph{Factorization homology of topological
  manifolds}, J. Topol. \textbf{8} (2015), no.~4, 1045--1084.

\bibitem[AK04]{aouina-klein}
M.~Aouina and J.~Klein, \emph{On the homotopy invariance of configuration
  spaces}, Algebr. Geom. Topol. \textbf{4} (2004), 813--827.

\bibitem[And10]{andrade}
R.~Andrade, \emph{From manifolds to invariants of {$E_n$}-algebras}, Ph.D.
  thesis, MIT, 2010.

\bibitem[AS94]{axelrod-singer}
S.~Axelrod and I.~Singer, \emph{Chern-{S}imons perturbation theory {$II$}}, J.
  Differential Geom. \textbf{39} (1994), no.~1, 173--213.

\bibitem[AT]{arone-turchin}
Gregory Arone and Victor Tuchin, \emph{On the rational homology of high
  dimensional analogues of spaces of long knots}, arXiv:1105.1576v4 [math.AT].

\bibitem[BM03]{berger-moerdijk}
Clemens Berger and Ieke Moerdijk, \emph{Axiomatic homotopy theory for operads},
  Comment. Math. Helv. \textbf{78} (2003), no.~4, 805--831. \MR{2016697
  (2004i:18015)}

\bibitem[Boe87]{boedigheimer}
C.-F. Boedigheimer, \emph{Stable splittings of mapping spaces}, Algebraic
  Topology (H.~Miller and D.~Ravenel, eds.), Lecture Notes in Mathematics, vol.
  1286, Springer, 1987, pp.~174--187.

\bibitem[Bri00]{brinkmeier}
M.~Brinkmeier, \emph{Tensor product of little cubes}, Preprint available at
  https://www.math.uni-bielefeld.de/sfb343/preprints/pr00098.pdf.gz, 2000.

\bibitem[BV68]{boardman-vogt:htpyeverything}
J.~M. Boardman and R.~M. Vogt, \emph{Homotopy-everything {$H$}-spaces}, Bull.
  Amer. Math. Soc. \textbf{74} (1968), 1117--1122. \MR{0236922 (38 \#5215)}

\bibitem[BV73]{boardman-vogt:lnm}
\bysame, \emph{Homotopy invariant algebraic structures on topological spaces},
  Lecture Notes in Mathematics, Vol. 347, Springer-Verlag, Berlin, 1973.
  \MR{0420609 (54 \#8623a)}

\bibitem[CLM76]{cohen-lada-may}
F.~Cohen, T.~Lada, and J.~P. May, \emph{The homology of iterated loop spaces},
  Lecture Notes in Mathematics, vol. 533, Springer-Verlag, Berlin-New York,
  1976.

\bibitem[CW]{campos-willwacher}
R.~Campos and T.~Willwacher, \emph{A model for configuration spaces of points},
  arxiv:1604.02043.

\bibitem[dBW]{debrito-weiss:product}
P.~Boavida de~Brito and M.~Weiss, \emph{The configuration category of a
  product}, arXiv:1701.06987.

\bibitem[dBW13]{debrito-weiss}
\bysame, \emph{Manifold calculus and homotopy sheaves}, Homol. Homot. Appl.
  \textbf{15} (2013), no.~2, 361--383.

\bibitem[DH14]{dwyer-hess:bv}
William Dwyer and Kathryn Hess, \emph{The {B}oardman-{V}ogt tensor product of
  operadic bimodules}, Contemporary Mathematics \textbf{620} (2014), 71--98.

\bibitem[DI01]{dugger-isaksen}
D.~Dugger and D.~Isaksen, \emph{Hypercovers in topology}, arXiv:0111287v1,
  2001.

\bibitem[Dun88]{dunn}
Gerald Dunn, \emph{Tensor product of operads and iterated loop spaces}, J. Pure
  Appl. Algebra \textbf{50} (1988), no.~3, 237--258. \MR{938617 (89g:55012)}

\bibitem[FN62]{fadell-neuwirth}
E.~Fadell and L.~Neuwirth, \emph{Configuration spaces}, Math. Scand.
  \textbf{10} (1962), 111--118.

\bibitem[Fre09]{fresse}
Benoit Fresse, \emph{Modules over operads and functors}, Lecture Notes in
  Mathematics, vol. 1967, Springer-Verlag, Berlin, 2009. \MR{2494775
  (2010e:18009)}

\bibitem[Ghr00]{ghrist}
R.~Ghrist, \emph{Configuration spaces and braid groups on graphs in robotics},
  Braids, Links, and Mapping class Groups: the Proceedings of Joan Birman's
  70th Birthday, AMS/IP Studies in Mathematics, vol.~19, 2000, pp.~31--41.

\bibitem[Hir03]{hirschhorn}
Philip~S. Hirschhorn, \emph{Model categories and their localizations},
  Mathematical Surveys and Monographs, vol.~99, American Mathematical Society,
  Providence, RI, 2003. \MR{MR1944041 (2003j:18018)}

\bibitem[Idr]{idrissi}
N.~Idrissi, \emph{The {L}ambrechts-{S}tanley model of configuration spaces},
  arXiv:1608.08054.

\bibitem[Kel05]{kelly}
G.~M. Kelly, \emph{Basic concepts of enriched category theory}, Repr. Theory
  Appl. Categ. (2005), no.~10, vi+137, Reprint of the 1982 original [Cambridge
  Univ. Press, Cambridge; MR0651714]. \MR{2177301}

\bibitem[Knu18]{knudsen}
B.~Knudsen, \emph{Higher enveloping algebras}, Geom. Topol. (2018), To appear.

\bibitem[LS05]{longoni-salvatore}
R.~Longoni and P.~Salvatore, \emph{Configuration spaces are not homotopy
  invariant}, Topology \textbf{44} (2005), no.~2, 375--380.

\bibitem[Lur]{lurie2}
J.~Lurie, \emph{Higher algebra}, Manuscript available at
  math.harvard.edu/$\sim$lurie.

\bibitem[LV12]{loday-vallette}
Jean-Louis Loday and Bruno Vallette, \emph{Algebraic operads}, Grundlehren der
  Mathematischen Wissenschaften [Fundamental Principles of Mathematical
  Sciences], vol. 346, Springer, Heidelberg, 2012. \MR{2954392}

\bibitem[May72]{may2}
J.~P. May, \emph{The geometry of iterated loop spaces}, Lecture Notes in
  Mathematics, vol. 271, Springer-Verlag, Berlin-New York, 1972.

\bibitem[McD75]{mcduff}
D.~McDuff, \emph{Configuration spaces of positive and negative particles},
  Topology \textbf{14} (1975), 91--107.

\bibitem[MSS02]{markl-shnider-stasheff}
Martin Markl, Steve Shnider, and Jim Stasheff, \emph{Operads in algebra,
  topology and physics}, Mathematical Surveys and Monographs, vol.~96, American
  Mathematical Society, Providence, RI, 2002. \MR{1898414 (2003f:18011)}

\bibitem[Rez96]{rezk}
Charles~W. Rezk, \emph{Spaces of algebra structures and cohomology of operads},
  ProQuest LLC, Ann Arbor, MI, 1996, Thesis (Ph.D.)--Massachusetts Institute of
  Technology. \MR{2716655}

\bibitem[Rie14]{riehl}
Emily Riehl, \emph{Categorical homotopy theory}, New Mathematical Monographs,
  Cambridge University Press, 2014.

\bibitem[Skl63]{sklyarenko}
E.~G. Sklyarenko, \emph{On the topological structure of locally bicompact
  groups and their factor spaces}, Mat. Sb. (N.S.) \textbf{60} (1963).

\bibitem[SW03]{salvatore-wahl}
P.~Salvatore and N.~Wahl, \emph{Framed discs operads and {B}atalin-{V}ilkovisky
  algebras}, Quart. J. Math. \textbf{54} (2003), 213--231.

\bibitem[Wer16]{werndli}
K.~Werndli, \emph{Cellular homotopy excision}, Ph.D. thesis, EPFL, 2016,
  available at infoscience.epfl.ch.

\end{thebibliography}

\end{document}